\documentclass[12pt,a4paper]{amsart}

\usepackage[english]{babel}
\usepackage[T1]{fontenc}
\usepackage[utf8]{inputenc}

\usepackage{amsmath, amsthm, amscd, amsfonts}
\usepackage{amssymb}
\usepackage{mathtools}

\usepackage{hyperref}
\usepackage[capitalise]{cleveref}

\usepackage{setspace}
\onehalfspacing
\usepackage[cmtip,arrow]{xy}
\usepackage{pb-diagram,pb-xy}
\usepackage{comment}
\setlength{\textwidth}{15.83cm}
\setlength{\hoffset}{-1.5cm}

\textwidth=13.5cm
\textheight=23cm
\parindent=16pt
\oddsidemargin=-0.5cm
\evensidemargin=-0.5cm
\topmargin=-0.5cm
%



\newcommand{\RR}{{\mathbb{R}}}

\newcommand{\Rforce}{\mathbb{R}}    

\newcommand{\GCH}{GCH}




\DeclareMathOperator{\len}{lh}

\DeclareMathOperator{\crit}{crit}
\DeclareMathOperator{\dom}{dom}
\DeclareMathOperator{\cf}{cf}
\DeclareMathOperator{\Col}{Col}
\DeclareMathOperator{\Add}{Add}

\DeclareMathOperator{\Ult}{Ult}

\DeclareMathOperator{\stem}{stem}

\DeclareMathOperator{\range}{range}

\def\MPB{{\mathbb{P}}}

\def\MRB{{\mathbb{R}}}

\def\k{\kappa}

\def\l{\lambda}

\def\a{\alpha}

\setlength{\textheight}{22cm} \setlength{\textwidth}{14cm}
\setlength{\oddsidemargin}{1cm} \setlength{\evensidemargin}{1cm}

\newtheorem{theorem}{Theorem}[section]
\newtheorem{lemma}[theorem]{Lemma}

\newtheorem{definition}[theorem]{Definition}

\newtheorem{remark}[theorem]{Remark}
\newtheorem{claim}[theorem]{Claim}
\newtheorem{question}[theorem]{Question}

\numberwithin{equation}{section}

\usepackage{pb-diagram}

\def\l{\lambda}

\def\rmark{\mbox{$\rm\bf\rule{0.06em}{1.45ex}\kern-0.05em R$}}
\def\pmark{\mbox{$\rm\bf\rule{0.06em}{1.45ex}\kern-0.05em P$}}
\def\nmark{\mbox{$\rm\bf\rule{0.06em}{1.45ex}\kern-0.05em N$}}
\def\vdash{\mbox{$\rm\| \kern-0.13em -$}}

\def\l{\lambda}

\def\rmark{\mbox{$\rm\bf\rule{0.06em}{1.45ex}\kern-0.05em R$}}
\def\pmark{\mbox{$\rm\bf\rule{0.06em}{1.45ex}\kern-0.05em P$}}
\def\nmark{\mbox{$\rm\bf\rule{0.06em}{1.45ex}\kern-0.05em N$}}
\def\vdash{\mbox{$\rm\| \kern-0.13em -$}}

\title[(Weak) Diamond can fail at the least inaccessible cardinal]{(Weak) Diamond can fail at the least inaccessible cardinal}

\author[M. Golshani]{Mohammad Golshani}
\address{School of Mathematics\\ Institute for Research in Fundamental Sciences (IPM)\\
P.O. Box:
19395-5746,\\
Tehran-Iran.}
\email{golshani.m@gmail.com}

\date{}

\thanks{The author's research has been supported by a grant from IPM (No. 98030417).}

\begin{document}
\begin{abstract}
Starting from suitable large cardinals, we force the failure of (weak) diamond  at the least inaccessible cardinal. The result improves an unpublished theorem of Woodin
and a recent result of Ben-Neria, Garti and Hayut.
\end{abstract}
\subjclass[2020]{03E35; -3E55}

\keywords{Diamond, inaccessible cardinal, Radin forcing}

\thanks{ } \maketitle

\section{Introduction}
We study the combinatorial principles diamond, introduced by Jensen \cite{jensen}, and weak diamond, introduced by Devlin-Shelah \cite{devlin-shelah},
and prove the consistency of their failure at the least inaccessible cardinal.

Suppose  $\kappa$ is an uncountable regular cardinal. Recall that  diamond at $\kappa,$ denoted $\Diamond_\kappa$, asserts the existence of a  sequence $\langle S_\alpha \mid \alpha < \kappa    \rangle$
such that
for each $\alpha < \kappa, S_\alpha \subseteq \alpha$, and if $X \subseteq \kappa,$ then $\{\alpha < \kappa \mid X \cap \alpha = S_\alpha   \}$ is stationary in $\kappa.$

Also, the weak diamond at $\kappa,$ denoted $\Phi_\kappa$, is the assertion ``For every $c: 2^{<\kappa} \to 2$, there exists $g: \kappa \to 2$
such that for all $f: \kappa \to 2,$ the set $\{\alpha < \kappa \mid c(f \upharpoonright \alpha) = g(\alpha)      \}$
is stationary in $\kappa$''.

It is easily seen that $\Diamond_{\kappa^+}$ implies $2^\kappa=\kappa^+$, and in fact by a celebrated theorem of Shelah \cite{shelah}, for all uncountable cardinals $\kappa, \Diamond_{\kappa^+}$
is equivalent to $2^\kappa=\kappa^+$. It follows that it is easy to force the failure of diamond at successor cardinals.

By \cite{devlin-shelah}, $2^\kappa < 2^{\kappa^+}$ implies $\Phi_{\kappa^+}$. It was later observed by Abraham and Baumgartner that  $\Phi_{\kappa^+}$ imples $2^\kappa < 2^{\kappa^+}$, see for example \cite{garti}, \cite{shelah-proper} where the proof is attributed to Abraham, or \cite{taylor} where the proof is attributed to Baumgartner. It follows that $\Phi_{\kappa^+}$ and $2^\kappa < 2^{\kappa^+}$ are equivalent and that  $\Diamond_{\kappa^+}$ implies $\Phi_{\kappa^+}$.  It also shows that it is easy to force $\Phi_{\kappa^+}$ to fail.

Unlike the case of successor cardinals, it is difficult to force the failure of diamond or weak diamond at an inaccessible cardinal.
By an old unpublished result of Woodin \cite{cummings99},  if $2^\kappa > \kappa^+$
and if $u$ is a measure sequence on $\kappa$ of length $\kappa^+,$
then in the generic extension by Radin forcing using $u$, diamond fails at $\kappa.$
A recent result of Ben-Neria, Garti and Hayut shows that if the model is prepaired more carefully, then the weak diamond at $\kappa$
fails in the resulting model as well.

However in the above model, $\kappa$ is not small, in the sense that it is a limit of some very large cardinals.
In this paper we extend the above results of Woodin and Ben-Neria, Garti and Hayut to obtain the failure of (weak) diamond at the least inaccessible cardinal.

\begin{theorem}  \label{thm:main theorem}
Assume $\kappa$ is a $(\kappa+3)$-strong cardinal. Then there is a generic extension of the universe in which
$\kappa$ is the least inaccessible cardinal and $\Phi_\kappa$ (and hence also $\Diamond_\kappa$) fails.
\end{theorem}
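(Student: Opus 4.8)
The plan is to combine Woodin's Radin-forcing argument for $\neg\Diamond_\kappa$, together with the Ben-Neria--Garti--Hayut strengthening to $\neg\Phi_\kappa$, with an interleaving of L\'evy collapses between the points of the Radin generic club, chosen so that every inaccessible cardinal below $\kappa$ is destroyed while $\kappa$ itself stays inaccessible and $\Phi_\kappa$ still fails. First I would prepare the ground model: by a standard preparatory forcing we may assume $\GCH$, which preserves the $(\kappa+3)$-strength of $\kappa$; fix $j\colon V\to M$ witnessing it, so $\crit(j)=\kappa$, $V_{\kappa+3}\subseteq M$ and ${}^{\kappa}M\subseteq M$. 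As in Woodin's construction, pass to a generic extension $V_1$ with $2^{\kappa}=\kappa^{++}$ through which $j$ lifts to $j_1\colon V_1\to M_1$ --- either by first forcing with $\Add(\kappa,\kappa^{++})$ and lifting via a surgery argument, or by taking the measure sequence long enough that the Radin forcing itself blows up $2^{\kappa}$ --- and in $V_1$ derive from $j_1$ a measure sequence $u$ on $\kappa$ with $\len(u)=\kappa^{+}$, coherent as computed in $M_1$; the full $(\kappa+3)$-strength is what makes this coherent sequence available.

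Next I would define, in $V_1$, a hybrid poset $\mathbb{P}$: a condition is essentially a Radin condition for $u$ --- a finite sequence with top object built from a pair $(u,A)$, $A\in\bigcap u$, and lower objects built from reflected measure sequences $w$ with $\len(w)<\kappa^{+}$ --- to which, in each gap between two consecutive entries and above the top entry, a L\'evy-collapse condition is attached. The collapses are prescribed by recursion on the entries so that, in the generic extension, every inaccessible cardinal of $V_1$ below $\kappa$, as well as every point of the generic club below $\kappa$, fails to stay inaccessible: a measure-one set $C_0$ of club points whose reflected sequences have length $\omega$ --- hence are singular of cofinality $\omega$ --- is kept as a cofinal set of singular cardinals, while between consecutive members $\beta<\beta'$ of $C_0$ everything in $(\beta^{+},\beta')$, including the regular club points of successor reflection length that the pure Radin forcing would preserve, is collapsed to have cardinality $\beta^{+}$ (with a corresponding collapse of the initial segment below $\min C_0$). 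One then checks, as for ordinary Radin forcing, that $\mathbb{P}$ has the Prikry property and that below any point of $C_0$ it factors as a poset of size $<\kappa$ followed by a highly closed tail, so that it adds no new bounded subsets of $\kappa$ except at proper initial segments and preserves $\kappa^{+}$, $\kappa^{++}$, and $2^{\kappa}=\kappa^{++}$. To lift $j_1$ through $\mathbb{P}$ one lifts through the Radin skeleton in the usual way and supplies a guiding generic for each extra collapse poset occurring in $j_1(\mathbb{P})$; these are built in $V_1$ by a standard transfinite construction using the closure of $M_1$ and the homogeneity of the L\'evy collapse.

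With a $\mathbb{P}$-generic $G$ in hand, the verification runs as follows. Since $\len(u)=\kappa^{+}\geq 2$, the Prikry property keeps $\kappa$ regular, and since the part of $\mathbb{P}$ below any member of $C_0$ has size $<\kappa$ while $\GCH$ held below $\kappa$ in $V_1$, every $2^{\beta}$ for $\beta<\kappa$ remains $<\kappa$; hence $\kappa$ is still inaccessible. On the other hand the cardinals of $V_1[G]$ below $\kappa$ are exactly the members $\beta$ of $C_0$, which are singular of cofinality $\omega$, together with their successors $\beta^{+}$; every other ordinal below $\kappa$ has been collapsed, so $\kappa$ is the least inaccessible. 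Finally, because $2^{\kappa}=\kappa^{++}$ is preserved and the generic club with its $\kappa^{+}$ levels survives in $V_1[G]$, the Woodin / Ben-Neria--Garti--Hayut argument for the failure of $\Phi_\kappa$ goes through essentially unchanged: from any $g\colon\kappa\to 2$ in $V_1[G]$ one reads off a colouring $c\colon 2^{<\kappa}\to 2$ and a function $f$ with $\{\alpha<\kappa\mid c(f\restriction\alpha)=g(\alpha)\}$ non-stationary. Since $\Phi_\kappa$ fails in $V_1[G]$, so does $\Diamond_\kappa$, which is the theorem.

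The main obstacle in carrying this out is the design and analysis of the collapse schedule inside the hybrid poset $\mathbb{P}$: it must be arranged so that $\mathbb{P}$ leaves $\kappa$ inaccessible, collapses \emph{every} smaller inaccessible (in particular the reflected Radin points of successor length, which plain Radin forcing leaves regular), and at the same time disturbs neither $2^{\kappa}=\kappa^{++}$ nor the combinatorics behind $\neg\Phi_\kappa$; and the collapses must be chosen coherently enough that $u$ anticipates them, so that $j_1$ lifts through $\mathbb{P}$. Proving the Prikry property for this hybrid forcing, building the guiding generics for the collapse posets that appear on the $M_1$-side of $j_1(\mathbb{P})$, and checking that the closed tails absorb the collapses without adding a bad $g$ or destroying the relevant stationary sets, are where the real work lies.
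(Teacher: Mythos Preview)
Your global strategy --- Radin forcing over a measure sequence of length $\kappa^{+}$ with interleaved L\'evy collapses, then the Woodin and Ben-Neria--Garti--Hayut arguments --- is exactly the paper's. But two concrete design choices in your plan would not work as written.

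\textbf{The collapse schedule.} You propose to isolate a measure-one set $C_0$ of club points carrying length-$\omega$ sequences and collapse only between consecutive elements of $C_0$. This does not fit the Radin-with-collapses machinery: a condition carries one collapse coordinate between each \emph{adjacent} pair of entries, and the Prikry property and factorization lemma depend on this uniformity. There is no clean way for a condition to ``know'' which of its future extensions will land in $C_0$. More importantly, the detour is unnecessary. The paper (following Cummings) interleaves $\Col(\kappa_i^{+4},<\kappa_{i+1})$ between \emph{every} pair of consecutive club points, guided by a single filter $F$ that is built into the ground-model preparation (your ``guiding generic'' is not an obstacle to be solved later --- it is part of the initial data $(j,F)$). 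The point you are missing is that with $\len(w)=\kappa^{+}$, every measure $w(\alpha)$ concentrates on sequences $u$ with $\len(u)<\kappa_u^{+}$, so every limit point of the generic club is already singular; the uniform collapse then turns the successor club points into successor cardinals, and a final $\Col(\omega,\kappa_0)$ handles the tail below $\kappa_0$.

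\textbf{The preparation.} Forcing $\GCH$ and then blowing up $2^\kappa$ alone is enough for $\neg\Diamond_\kappa$ but not for the paper's $\neg\Phi_\kappa$ argument. That argument encodes, for each club point $\kappa_u$, a $\kappa_u^{+}$-sequence of maximal antichains of $\Rforce_u$ as a single element of $2^{\kappa_u}$; this requires $2^{\kappa_u}=2^{\kappa_u^{+}}$ on a measure-one set, which reflects the fact that $2^{\kappa}=2^{\kappa^{+}}=\kappa^{++}$ in $M$. The paper therefore prepares so that $2^{\alpha}=\alpha^{++}$ at \emph{every} inaccessible $\alpha\leq\kappa$ (again following Cummings). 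Under your preparation, $2^{\alpha}=\alpha^{+}<2^{\alpha^{+}}$ below $\kappa$, and the coding bijection $H(\alpha)$ does not exist, so the coloring $c$ cannot be defined.
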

\begin{remark}
	\begin{enumerate}
		\item In Theorem \ref{thm:main theorem}, we can make $\kappa$ to be the least Mahlo cardinal or the least greatly Mahlo cardinal and so on. See remarks after Question \ref{diamond and gch}.
		
	\item By \cite{zeman}, some large cardinal assumptions are needed to get the failure of diamond at Mahlo cardinals.	
	\end{enumerate}

\end{remark}
In Section \ref{sec:radin forcing} we define the generic extension we are looking for and prove some of its basic properties.
Then in Section \ref{sec:diamond} we extend Woodin's theorem by showing that $\Diamond_\kappa$ fails in the resulting model. Finally in Section
\ref{sec:weak diamond}, we extend the Ben-Neria, Garti and Hayut result and show that $\Phi_\kappa$ also fails in the model.
We may mention that, though a model for the failure of $\Phi_\kappa$ is necessarily a model in which $\Diamond_\kappa$ fails,  we have decided to include the proofs of the failure of both $\Diamond_\kappa$  and $\Phi_\kappa$, as we found both of the proofs interesting. Also the proof of the failure of
$\Diamond_\kappa$ is based on the unpublished work of Woodin \cite{cummings99} and so it will make public the basic ideas first developed by Woodin in connection to the failure of diamond at inaccessible cardinals.

\section{Radin forcing with interleaved collapses}
\label{sec:radin forcing}
In this section we define the main forcing notion used in the proof of  \ref{thm:main theorem} which is based on ideas from \cite{cummings}. We assume that the ground model satisfies the following:
\begin{itemize}
\item $\kappa$ is $(\kappa+2)$-strong, as witnessed by $j: V \to M \supseteq V_{\kappa+2}$ with $\crit(j)=\kappa.$

\item For each inaccessible cardinal $\alpha \leq \kappa, 2^\alpha=\alpha^{++}$, further, if $0<n<\omega,$ then $2^{\alpha^{+n}} = \alpha^{+n+1}$.

\item $M\models$``$2^\kappa=\kappa^{++}$ and for all $0<n < \omega, 2^{\kappa^{+n}} = \kappa^{+n+1}$''.

\item There is $F \in V$ which is $\MPB=\Col(\kappa^{+4}, < i(\kappa))_N$-generic over $N$, where $i: V \to N $ is the ultrapower of
$V$ by $U=\{ X \subseteq \kappa: \kappa \in j(X)       \}$.

\item $j$ is generated by a $(\kappa, \kappa^{+3})$-extender.
\end{itemize}

Such a model can be constructed starting from $GCH$ and a  $(\kappa+3)$-strong cardinal $\kappa$ \cite{cummings}.
Note that
$F \in M$, as it can be coded by an element of $V_{\kappa+2}$, also, if $k: N \to M$ is the induced elementary embedding, defined by
$k([f]_{U})=j(f)(\kappa),$
then $\crit(k)=\kappa^{+3}_N < \kappa^{+3}_M=\kappa^{+3}$
and $F$ can be transferred along $k$, in the sense that $\langle k''[F] \rangle$, the filter generated by $k''[F]$, is $\Col(\kappa^4, < j(\kappa))_M$-generic over $M$.

Set

$\hspace{1.5cm}$ $P^*=\{  f: \kappa \to V_\kappa \mid \dom(f) \in U$ and $\forall \alpha, f(\alpha) \in    \Col(\alpha^{+4}, < \kappa)                \}$.

$\hspace{1.5cm}$ $F^* = \{ f \in P^* \mid i(f)(\kappa) \in F    \}$.

Then $U$ can be read off $F^*$ as $U= \{ X \subseteq \kappa \mid \exists f \in F^*, X=\dom(f)    \}$.

\subsection{Measure sequences}
The following definitions are based on \cite{cummings} with  modifications required for our purposes.
\begin{definition}
A constructing pair, is a pair $(k, g)$ where
\begin{itemize}
\item $k: V \to k(V)$ is a non-trivial elementary embedding into  a transitive inner model, and if $\alpha=\crit(k),$ then
$^{\alpha}k(V)\subseteq k(V).$

\item $g$ is $\Col(\alpha^{+4}, < k_0(\alpha))_{k_0(V)}$-generic over $k_0(V)$, where $k_0: V \to k_0(V) \simeq \Ult(V, U_k)$ is the ultrapower embedding approximating $k$. Also factor $k$ through $k_0$, say $k=l \circ k_0.$

\item $g \in k(V)$.

\item If we transfer $g$  along $l$ to get $\langle l''[g] \rangle$, the filter generated by $l''[g]$,  then it is  $\Col(\alpha^{+4}, < k(\alpha))_{k(V)}$-generic over $k(V)$.
\end{itemize}
\end{definition}
In particular note that the pair $(j, F)$ constructed above is a constructing pair.
\begin{definition}
If $(k, g)$ is a constructing pair as above, then $g^* = \{ f \in P^* \mid k_0(f)(\alpha) \in g    \}.$
\end{definition}
\begin{definition}
Suppose $(k, g)$ is a constructing pair as above. A sequence $w$ is constructed by $(k, g)$ iff
\begin{itemize}
\item $w \in k(V).$
\item $w(0) = \alpha = \crit(k)$.
\item $w(1)=g^*.$
\item For $1 < \beta< \len(w), w(\beta)=\{ X \subseteq V_\alpha \mid w \upharpoonright \beta \in k(X)              \}$.
\item $k(V) \models |\len(w)| \leq w(0)^{+}$.
\end{itemize}
\end{definition}
If $w$ is constructed by $(k, g)$, then we set $\kappa_w=w(0),$
and if $\len(w) \geq 2$, then we define

$\hspace{1.5cm}$ $F^*_w=w(1)$.

$\hspace{1.5cm}$ $\mu_w = \{ X \subseteq \kappa_w \mid \exists f \in F^*_w, X = \dom(f)     \}$.

$\hspace{1.5cm}$ $\bar{\mu}_w = \{ X \subseteq V_{\kappa_w} \mid \{ \alpha \mid \langle \alpha \rangle \in \mu_w     \} \in \mu_w       \}$.

$\hspace{1.5cm}$ $F_w= \{ [f]_{\mu_w} \mid f \in F^*_w   \}$.

$\hspace{1.5cm}$ $\mathcal{F}_w = \bar{\mu}_w \cap \bigcap \{w(\alpha) \mid 1 < \alpha < \len(w)          \}$.

\begin{definition}
Define inductively

$\hspace{1.5cm}$ $\mathcal{U}_0 = \{ w \mid \exists (k, g)$ such that $(k, g)$  constructs $w  \}$.

$\hspace{1.5cm}$ $\mathcal{U}_{n+1}= \{ w \in \mathcal{U}_n \mid \mathcal{U}_n \cap V_{\kappa_w} \in  \mathcal{F}_w         \}.$

$\hspace{1.5cm}$ $\mathcal{U}_{\infty}= \bigcap_{n \in \omega} \mathcal{U}_{n}$.

The elements of $\mathcal{U}_{\infty}$ are called measure sequences.
\end{definition}
Now let $u$ be the measure sequence constructed using $(j, F)$ above. It is easily seen that for each $\alpha < \kappa^{++}, u \upharpoonright \alpha$
exists and is in $\mathcal{U}_{\infty}$.

In the next subsection, we assign to each $w \in \mathcal{U}_{\infty}$
a forcing notion $\Rforce_w,$ which is Radin forcing with interleaved collapses. The forcing notion used for the proof of \ref{thm:main theorem} is then $\Rforce_{u \upharpoonright \kappa^+}$.

\subsection{Definition of forcing}
We are now ready to define our main forcing notion. The forcing is very similar to the one defined by Cummings  \cite{cummings}. We assign to each measure sequence  $w \in \mathcal{U}_{\infty}$, a forcing notion $\Rforce_w$.The forcing $\Rforce_w$ adds a club $C$ of ground model regular cardinals into $\kappa_w$ in such a way that if $\alpha < \beta$ are successive points in $C$, then it collapses all cardinals in the interval $(\alpha^{+4}, \beta)$ into $\alpha^{+4}$
and makes $\beta=\alpha^{+5}$. When $\len(w)=\kappa_w^+,$ then the forcing preserves the inaccessibility of $\kappa_w$ and makes it the least
inaccessible cardinal above $\min(C)$.
\begin{definition}
Assume $w$ is a measure sequence. $\mathbb{P}_w$ consists of all tuples $s= (w, \lambda, A, H, h),$ where
\begin{enumerate}
\item $w$ is a measure sequence.

\item $\lambda < \kappa_w.$

\item $A \in \mathcal{F}_w$.

\item $H \in F^*_w$ with  $\dom(H)=\{ \kappa_v > \lambda \mid v \in A  \}$.

\item $h \in \Col(\lambda^{+4}, < \kappa_w).$
\end{enumerate}

Note that if $\len(w)=1,$ then the above tuple is of the form  $s=(w, \lambda, \emptyset, \emptyset, h)$ (where  $\lambda < \kappa_w$ and $h \in \Col(\lambda^{+4}, < \kappa_w)).$
\end{definition}
We define the order $\leq^*$ on $\MPB_w$ as follows.
\begin{definition}
Assume $s= (w, \lambda, A, H, h)$ and $s'= (w', \lambda', A', H', h')$ are in $\MPB_w.$ Then $s \leq^* s'$ iff
\begin{enumerate}
\item $w=w'$ and $\lambda=\lambda'.$
\item $A \subseteq A'.$
\item For all $u \in A, H(\kappa_u) \supseteq H'(\kappa_u).$
\item $h \supseteq h'.$
\end{enumerate}
\end{definition}
We now define our main forcing notion.
\begin{definition}
If $w$ is a measure sequence, then $\Rforce_w$ is the set of finite sequences $p= \langle  p_k \mid k \leq n     \rangle$,
where
\begin{enumerate}
\item $p_k = (w_k, \lambda_k, A_k, H_k, h_k) \in \MPB_{w_k}$ , for each $k \leq n.$

\item $w_n=w.$

\item If $k<n,$ then $\lambda_{k+1}=\kappa_{w_k}$.
\end{enumerate}
\end{definition}
Given $p \in \Rforce_w$ as above, we denote it by
\[
p= \langle  p_k \mid k \leq n^p     \rangle
\]
and call $n^p$ the length of $p$.
We also use $w^p_k$ for $w^{p_k}$ and so on (for $k \leq n^p$).
The direct extension relation $\leq^*$ is defined  on $\Rforce_w$  in the natural way:
\begin{definition}
Assume $p= \langle  p_k \mid k \leq n     \rangle$ and $p'= \langle  p'_k \mid k \leq n'     \rangle$ are in $\Rforce_w.$
Then $p \leq^* p'$ ($p$ is a direct extension or a Prikry extension of $p'$) iff $n=n'$ and for all
$k \leq n, p_k \leq^* p'_k.$
\end{definition}
We now define one point extension of a condition, which allows us to define the extension relation on $\Rforce_w.$
\begin{definition}
Assume $p= (w, \lambda, A, H, h) \in \MPB_w$  and $w' \in A.$ Then $\Add(p, w')$ is the condition $\langle p_0, p_1 \rangle \in \Rforce_w$ defined by
\begin{enumerate}
\item $p_0 = (w', \lambda, A \cap V_{\kappa_{w'}}, H \upharpoonright V_{\kappa_{w'}}, h)$.

\item $p_1= (w, \kappa_{w'}, A \setminus V_\eta, H \upharpoonright \dom(H) \setminus V_\eta, H(\kappa_{w'})  ),$
where $\eta = \sup \range (H(\kappa_{w'}))$.
\end{enumerate}
 In the case that this does not yield a member of $\Rforce_w$, then $\Add(p,w')$ is undefined.

If $p=\langle p_0,\dots,p_n \rangle\in \Rforce_w$ and $u\in A_k$ for some $k\leq n$ then $\Add(p,u)$ is the member of $\Rforce_w$ obtained by replacing $p_k$ with the two members of $\Add(p_k,u)$.
That is,
\begin{itemize}
	\item  $\Add(p,u) \upharpoonright  k=p \upharpoonright k.$
	\item  $\Add(p,u)_{k}=\Add(p_k,u)_{0}.$
	\item  $\Add(p,u)_{k+1}=\Add(p_k,u)_{1}.$
	\item  $\Add(p,u) \upharpoonright [k+2,n+1]= p \upharpoonright [k+1,n].$
\end{itemize}

The forcing order $\leq$ on $\Rforce_w$ is the smallest transitive relation containing the direct order $\leq^*$ and all pairs of the form $(p,\Add(p,u))$.
\end{definition}
The next lemma shows that any condition $p$  has a direct extension $q$ such that
$\Add(q, u)$ is well-defined for all $k \leq n^q$ and all $u \in A^q_k$.
\begin{lemma}
	\label{almost all can be added}
	\begin{itemize}
		\item [(a)] Suppose $p= (w, \lambda, A, H, h) \in \MPB_w$. Then
		\[
		A'=\{  w' \in A:  \Add(p, w') \text{~is well-defined~}           \} \in \mathcal{F}_w.
		\]
		\item [(b)] Suppose $p\in \Rforce_w.$ Then there exists
		$q \leq^* p$ such that for all $k \leq n^q$ and all $u \in A^q_k, \Add(q, u) \in \Rforce_w$ is well-defined.
	\end{itemize}
\end{lemma}
\begin{proof}
	Clause (b)   follows from (a), so let us prove (a). Let $p= (w, \lambda, A, H, h) \in \MPB_w$. We have to show that
	$A' \in \bar{\mu}_w \cap \bigcap \{w(\alpha) \mid 1 < \alpha < \len(w)          \}.$
	
	Suppose $(k, g)$ constructs $w$, where $k: V \to M,$ and let $k_0: V \to N \simeq \Ult(V, U)$
	be the corresponding ultrapower embedding.
	
	Let us first show that $A' \in \bar{\mu}_w.$ We have
	
	$\hspace{2.cm}$ $A' \in \bar{\mu}_w \iff \{ \a: \langle \a \rangle  \in A'           \} \in U$
	
	$\hspace{3.3cm}$ $\iff \kappa_w \in k(\{ \a: \langle \a \rangle  \in A'           \})$
	
	$\hspace{3.3cm}$ $\iff \langle \kappa_w \rangle  \in k(A')$
	
	$\hspace{3.3cm}$ $\iff \Add(k(p), \langle \kappa_w \rangle)$ is a well-defined condition in $\Rforce^M_{k(w)}.$
	
	But we have
	$$ \Add(k(p), \langle \kappa_w \rangle)= \langle (\langle \kappa_w \rangle, \lambda, A, H, h), (k(w), \kappa_w, A^*, H^*,  k(H)(\kappa_w))                        \rangle,$$
	where $A^*=k(A) \setminus V_{\kappa_w}$ and $H^*= k(H) \upharpoonright \dom(k(H)) \setminus V_{\kappa_w}$. Thus $\Add(k(p), \langle \kappa_w \rangle)$   is  well-defined,
	which implies $A' \in \bar{\mu}_w.$
	
	Now let  $1< \a < \len(w).$  Then
	
	$\hspace{2.cm}$ $A' \in w(\a) \iff w \upharpoonright \a \in k(A')$
	
	$\hspace{3.65cm}$ $\iff \Add(k(p), w \upharpoonright \a)$ is a well-defined condition in $\Rforce^M_{k(w)}.$
	
	By an argument as above, it is easily seen that $\Add(k(p), w \upharpoonright \a)$ is a well-defined condition in $\Rforce^M_{k(w)},$ and hence $A' \in w(\a)$.
	Thus $$A' \in \bar{\mu}_w \cap \bigcap \{w(\alpha) \mid 1 < \alpha < \len(w)          \}$$ as required.
\end{proof}
By the above lemma we can always assume that $\Add(q, u)$ is well-defined for all $q \in \Rforce_w$, all $k \leq n^q$ and all $u \in A^q_k$.

\subsection{Basic properties of the forcing notion $\Rforce_{w}$}
We now state the main properties of the forcing notion $\Rforce_w$.
\begin{lemma} \label{thm:chain condition}
$(\Rforce_w, \leq)$ satisfies the $\kappa_w^+$-c.c.
\end{lemma}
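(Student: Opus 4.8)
The plan is to prove this by the usual Radin-forcing argument: show that conditions in $\Rforce_w$ with the same ``stem'' (the finite sequence of measure sequences, ordinals $\lambda_k$, and lower parts $h_k$ appearing in the condition) are pairwise compatible, and then count the possible stems. First I would make precise the notion of stem: given $p = \langle p_k \mid k \le n \rangle$ with $p_k = (w_k, \lambda_k, A_k, H_k, h_k)$, let the stem be $\operatorname{stem}(p) = \langle (w_k, \lambda_k, h_k) \mid k \le n \rangle$. Two conditions $p, q$ with $\operatorname{stem}(p) = \operatorname{stem}(q)$ have the same length $n$, the same measure sequences $w_k$, the same $\lambda_k$, and the same Cohen parts $h_k \in \Col(\lambda_k^{+4}, < \kappa_{w_k})$; I claim they have a common direct extension. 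Indeed, at each coordinate $k$ intersect the sets: since $A_k, A'_k \in \mathcal{F}_{w_k}$ and $\mathcal{F}_{w_k}$ is a filter (in fact the diagonal intersection of the measures $w_k(\alpha)$ together with $\bar\mu_{w_k}$, so closed under finite intersections), we have $A_k \cap A'_k \in \mathcal{F}_{w_k}$; and the functions $H_k, H'_k$ can be merged to a single $H''_k \in F^*_{w_k}$ with domain $\{\kappa_v > \lambda_k \mid v \in A_k \cap A'_k\}$ by taking, on each relevant $\kappa_v$, a common extension of $H_k(\kappa_v)$ and $H'_k(\kappa_v)$ in the Cohen-type poset — this is possible because $H_k(\kappa_v), H'_k(\kappa_v)$ are two conditions with the same stem-image, or more directly because $F^*_w$, being essentially an ultrafilter-coded object arising from a $\Col$-generic, is $\kappa_w$-directed in the relevant sense; one should check that $H''_k \in F^*_{w_k}$ using the definition $F^*_w = \{f \in P^* \mid i(f)(\kappa) \in F\}$ and the fact that $F$ is a filter. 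Since the $h_k$ agree, clauses (1)–(4) of $\le^*$ hold, so $\langle (w_k, \lambda_k, A_k \cap A'_k, H''_k, h_k) \mid k \le n \rangle$ is a common direct extension, witnessing compatibility.

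Next I would count stems. A stem is a finite sequence $\langle (w_k, \lambda_k, h_k) \mid k \le n \rangle$. Each $w_k$ is a measure sequence with $\kappa_{w_k} \le \kappa_w$, and the set of all measure sequences below $\kappa_w$ — being a subset of $V_{\kappa_w}$ up to the top sequence $w$ itself — has size at most $2^{<\kappa_w} \cdot \kappa_w^+$; but using that the ground model satisfies $2^\alpha = \alpha^{++}$ for inaccessibles and $\operatorname{GCH}$ elsewhere as arranged, and that $\len(w) \le \kappa_w^{++}$ in $M$, one gets that the number of relevant measure sequences (those of length $< \kappa_w$, appearing as $w_k$ with $k < n$) is at most $\kappa_w$, while $w_n = w$ is fixed. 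Each $\lambda_k < \kappa_{w_k} \le \kappa_w$, so there are $< \kappa_w$ choices. Each $h_k \in \Col(\lambda_k^{+4}, < \kappa_{w_k})$, a poset of size $< \kappa_{w_k} \le \kappa_w$ since $\kappa_{w_k}$ is inaccessible (it is the critical point of an embedding), so $< \kappa_w$ choices. Thus the number of stems of a fixed finite length is $\le \kappa_w$, and summing over $n \in \omega$ still gives $\le \kappa_w$. Any antichain must therefore inject into the set of stems — two conditions with the same stem are compatible — so every antichain has size $\le \kappa_w < \kappa_w^+$, giving the $\kappa_w^+$-c.c.

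The main obstacle I expect is the compatibility lemma, specifically verifying that the merged function $H''_k$ genuinely lies in $F^*_{w_k}$ and that the pointwise common extensions of $H_k(\kappa_v)$ and $H'_k(\kappa_v)$ exist — this is where one must use that $F^*_w$ (equivalently $F_w$, the image of $F^*_w$ under $[\,\cdot\,]_{\mu_w}$, which corresponds to a genuine $\Col(\kappa^{+4}, <\kappa_w)$-generic filter after transfer) is a \emph{filter} and hence directed, rather than merely an arbitrary subset of $P^*$. One has to be a little careful that when we intersect $A_k$ with $A'_k$ the domain of the required function shrinks compatibly, and that the condition $\dom(H''_k) = \{\kappa_v > \lambda_k \mid v \in A_k \cap A'_k\}$ of clause (4) in the definition of $\MPB_w$ is exactly met. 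A secondary subtlety is confirming the count $\kappa_w$ rather than $\kappa_w^+$ for the stems: this hinges on the hypothesis that below $\kappa_w$ every inaccessible $\alpha$ satisfies $2^\alpha = \alpha^{++}$, which bounds the number of measure sequences on each $\alpha < \kappa_w$ (there are at most $2^\alpha = \alpha^{++} < \kappa_w$ of them since each has length $< \alpha^{++}$ and is coded in $H(\alpha^{++}) \subseteq V_{\kappa_w}$), and hence at most $\kappa_w$ measure sequences altogether below $\kappa_w$. Everything else — transitivity bookkeeping for $\le$, and that a common \emph{direct} extension is in particular a common extension — is routine.
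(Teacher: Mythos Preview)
Your argument is correct and follows the same stem-counting strategy as the paper, but the paper organizes it a bit differently and more economically. The paper does \emph{not} put the top collapse condition $h_n$ into the stem: it writes each $p$ as $d_p^{\frown}(w,\lambda^p,A^p,H^p,h^p)$ with $d_p \in V_{\kappa_w}$, shrinks a putative $\kappa_w^+$-antichain so that $d_p$ and $\lambda^p$ are fixed (only $\kappa_w$ choices, since $\kappa_w$ is inaccessible and $|V_{\kappa_w}|=\kappa_w$), observes that compatibility of the remaining $A^p,H^p$ parts is automatic (via $\mathcal{F}_w$ being a filter and $F^*_w$ arising from the generic $F$), and concludes that incompatibility must come from the $h^p$'s --- contradicting the $\kappa_w$-c.c.\ of $\Col(\lambda^{+4},<\kappa_w)$. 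This sidesteps two small wrinkles in your version: first, your claim that $\Col(\lambda_n^{+4},<\kappa_{w_n})$ has size $<\kappa_{w_n}$ is false at the top coordinate $k=n$ (there $\kappa_{w_n}=\kappa_w$ and the poset has size exactly $\kappa_w$), though this is harmless for the final bound of $\kappa_w$ stems; second, your appeal to $2^\alpha=\alpha^{++}$ to count lower measure sequences is unnecessary --- for $k<n$ the entire tuple $(w_k,\lambda_k,h_k)$ lies in $V_{\kappa_w}$, so there are at most $\kappa_w$ lower stems outright. Your identification of the $H$-merging step (using that $F$ is a filter, hence $F^*_w$ is directed modulo $\mu_w$) is exactly the point the paper leaves implicit.
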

\begin{proof}
Assume on the contrary that $A \subseteq \Rforce_w$ is an antichain of size $\kappa_w^+.$ We can assume that all $p \in A$ have the same length $n$.
Write each $p \in A$ as $p = d_p ^{\frown} p_n,$ where $d_p \in V_{\kappa_w}$ and $p_n = (w, \lambda^p, A^p, H^p, h^p)$. By shrinking $A$, if necessary,
we can assume that there are fixed $d \in V_{\kappa_w}$ and $\lambda < \kappa_w$ such that for all $p \in A,$ $d_p=d$ and $\lambda_p = \lambda.$

Note that for $p \neq q$ in $A$, as $p$ and $q$ are incompatible, we must have $h^p$ is incompatible with $h^q$. But $\Col(\lambda^{+4}, < \kappa_w)$
satisfies the $\kappa_w$-c.c., and we get a contradiction.
\end{proof}
\begin{lemma} (The factorization lemma)  \label{thm:factorization lemma}
Assume $p=\langle p_0,\dots,p_n \rangle\in \Rforce_w$ with $p_i=(w_i, \lambda_i, A_i, H_i, h_i)$ and $m<n.$ Set $p^{\leq m}= \langle p_0, \dots, p_m \rangle$
and $p^{>m} = \langle  p_{m+1}, \dots, p_n     \rangle$.

$($a$)$  $p^{\leq m} \in \Rforce_{w_m},$
$p^{>m} \in \Rforce_{w}$ and there exists $i: \Rforce_w / p \rightarrow \Rforce_{w_m}/p^{\leq m} \times \Rforce_w / p^{>m}$
which is an isomorphism with respect to both $\leq^*$ and $\leq.$

$($b$)$ If $m+1 <n,$ then there exists
$i: \Rforce_w / p \rightarrow \Rforce_{w_m}/p^{\leq m} \times \Col(\kappa_{w_m}^{+4}, <\kappa_{w_{m+1}}) \times \Rforce_w / p^{>m+1}$
which is an isomorphism with respect to both $\leq^*$ and $\leq.$ \hfill$\Box$
\end{lemma}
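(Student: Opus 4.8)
The plan is to derive both parts from one structural feature of $\Rforce_w$: refining a condition below a given $\kappa_{w_k}$ and refining it above $\kappa_{w_k}$ are independent operations. What makes this work is that the top sequences $w_0,w_1,\dots$ of the blocks of a condition have strictly increasing critical points (immediate from the chaining clause $\lambda_{k+1}=\kappa_{w_k}$ together with the clause $\lambda<\kappa_w$ in the definition of $\MPB_w$), so no block is ever removed by an extension: the operation $\Add$ and the direct order $\leq^*$ only refine existing blocks and insert new blocks, with smaller critical points, between consecutive old ones.

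For (a): from the definitions, $p^{\leq m}\in\Rforce_{w_m}$ (its top sequence is $w_m$) and $p^{>m}\in\Rforce_w$ (no constraint is placed on the $\lambda$ of a first block beyond $\lambda<\kappa$ of that block's top sequence). Given $q\leq p$, let $\ell$ be the index with $w^q_\ell=w_m$; it exists because the $w_m$-block of $p$ persists in $q$, and it is unique by the strictly increasing critical points. Put $i(q)=(q^{\leq\ell},q^{>\ell})$. Then $q^{\leq\ell}\in\Rforce_{w_m}$ with $q^{\leq\ell}\leq p^{\leq m}$, and $q^{>\ell}\in\Rforce_w$ with $q^{>\ell}\leq p^{>m}$, and $i$ is a bijection with inverse $(r,s)\mapsto r\append s$: this concatenation lies in $\Rforce_w$ and extends $p$ because $r$ has top sequence $w_m$ while the first block of $s$ has $\lambda=\kappa_{w_m}$ (this value is inherited from $p_{m+1}$, whose $\lambda_{m+1}$ equals $\kappa_{w_m}$, and it is never altered by an extension), so the chaining clause holds at the seam. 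Preservation of $\leq^*$ in both directions is immediate, $\leq^*$ being coordinatewise. For $\leq$, recall it is the transitive closure of $\leq^*$ and the pairs $(q,\Add(q,u))$; the operation $\Add(q,u)$ alters exactly one block of $q$, and if that block is the $w_m$-block it splits into two blocks of critical point $\leq\kappa_{w_m}$, both remaining on the $\Rforce_{w_m}$ side, so in every case $\Add(q,u)$ corresponds under $i$ to an $\Add$ performed on one coordinate of $i(q)$ — concretely $\Add(q,u)^{\leq\ell'}=\Add(q^{\leq\ell},u)$ with $\ell'=\ell$ or $\ell+1$ according to whether $u$ sits strictly below the $w_m$-block or is added to it. Hence $i$ and $i^{-1}$ carry the generating relations of $\leq$, and therefore $\leq$ itself, and $i$ is an isomorphism.

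For (b): apply (a) twice. Since $m+1<n$, part (a) at index $m+1$ gives $\Rforce_w/p\cong\Rforce_{w_{m+1}}/p^{\leq m+1}\times\Rforce_w/p^{>m+1}$, and applying (a) at index $m$ inside the first factor (legitimate as $m<m+1$) replaces $\Rforce_{w_{m+1}}/p^{\leq m+1}$ by $\Rforce_{w_m}/p^{\leq m}\times\Rforce_{w_{m+1}}/\langle p_{m+1}\rangle$. It then remains to identify $\Rforce_{w_{m+1}}/\langle p_{m+1}\rangle$ with $\Col(\kappa_{w_m}^{+4},<\kappa_{w_{m+1}})$. Here $p_{m+1}=(w_{m+1},\kappa_{w_m},A_{m+1},H_{m+1},h_{m+1})$ is a pure collapse block — $\len(w_{m+1})=1$, so $A_{m+1}=H_{m+1}=\emptyset$, no one-point extension is available, and $h_{m+1}\in\Col(\kappa_{w_m}^{+4},<\kappa_{w_{m+1}})$ — so on $\Rforce_{w_{m+1}}/\langle p_{m+1}\rangle$ the orders $\leq^*$ and $\leq$ coincide and are just reverse inclusion of the collapse coordinate; thus $\Rforce_{w_{m+1}}/\langle p_{m+1}\rangle$ is the cone below $h_{m+1}$ in $\Col(\kappa_{w_m}^{+4},<\kappa_{w_{m+1}})$, which is isomorphic to $\Col(\kappa_{w_m}^{+4},<\kappa_{w_{m+1}})$ by homogeneity of the Lévy collapse. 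Composing the three isomorphisms yields the map in (b).

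I expect the main obstacle to be the verification in (a) that $i$ respects the non-direct order $\leq$, that is, that the seam at the $w_m$-block is stable under the generating one-point extensions (the identity $\Add(q,u)^{\leq\ell'}=\Add(q^{\leq\ell},u)$ above); everything else amounts to bookkeeping with the clauses defining $\Rforce_w$, $\leq^*$ and $\Add$.
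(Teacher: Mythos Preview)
The paper gives no proof of this lemma (it is stated with a terminal $\Box$ and deferred to the literature), so there is nothing to compare against; your argument for part (a) is the standard one and is correct. The key structural observations --- that the $w_m$-block persists in every extension, that the $\lambda$-coordinate of the block immediately above it is frozen at $\kappa_{w_m}$, and that $\Add$ acts locally on a single block --- are exactly what is needed, and your treatment of the seam under one-point extensions is fine.

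There is, however, a genuine gap in your argument for (b). You assert that $\len(w_{m+1})=1$, calling $p_{m+1}$ a ``pure collapse block'', but nothing in the hypotheses of the lemma guarantees this: $w_{m+1}$ is an arbitrary measure sequence appearing in the condition $p$, and may well have length $>1$. When $\len(w_{m+1})>1$, the forcing $\Rforce_{w_{m+1}}/\langle p_{m+1}\rangle$ admits one-point extensions from the measures $w_{m+1}(\alpha)$, $\alpha>1$, and is strictly richer than $\Col(\kappa_{w_m}^{+4},<\kappa_{w_{m+1}})$; the identification you make simply fails. Your double application of (a) is correct and yields $\Rforce_{w_m}/p^{\leq m}\times \Rforce_{w_{m+1}}/\langle p_{m+1}\rangle \times \Rforce_w/p^{>m+1}$; it is only the last step, collapsing the middle factor, that requires the extra hypothesis $\len(w_{m+1})=1$. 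Either read part (b) as carrying this implicit assumption (which is the situation that actually arises in applications, namely when $\kappa_{w_{m+1}}$ is a successor point of the generic club), or note that the literal statement needs it.
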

The next lemma can be proved as in \cite{cummings} (in fact in \cite{cummings}  the lemma is proved for a more complicated forcing notion).
\begin{lemma}  \label{thm:prikry property}
  $(\RR_w, \leq, \leq^*)$ satisfies the Prikry property, i.e., given a condition $p \in \RR_w$ and a statement $\sigma$ of the forcing language, there exists $q \leq^* p$ which decides $\sigma.$
\end{lemma}
\begin{proof}
As requested by the referee, we provide a proof for completeness. We follow  \cite{hayut-eskew}.
	We prove the lemma by induction on $\k_w.$ Thus, assuming  it is true for $\Rforce_u$ with $\k_u < \k_w,$ we prove it for $\Rforce_w.$ We prove the lemma in a sequence of claims.

Let us first show that it suffices to prove the lemma for conditions of length $1$.
\begin{claim}
	Suppose the lemma holds for all conditions of length $1$. Then it holds for all conditions
\end{claim}
\begin{proof}
 We prove the claim  by induction on $\len(p)$. When $\len(p)=1$, this follows from the assumption.
	Thus suppose that $\len(p) \geq 2$; say
	$$p= s^{\frown} \langle (u, \lambda', A', H', h') \rangle ^{\frown} \langle  (w, \lambda, A, H, h)          \rangle.$$
	By the factorization Lemma \ref{thm:factorization lemma}, we have
	\[
	\Rforce_w/p \simeq (\Rforce_u / s^{\frown} \langle (u, \lambda', A', H', h') \rangle) \times (\Rforce_w / \langle  (w, \lambda, A, H, h)          \rangle).
	\]
	Let $\langle  s_i: i < \k_u     \rangle$
	enumerate $\mathbf{L} \cap V_{\k_u}$, where $\mathbf{L}$ is the set of all stems of conditions in $\Rforce_w$. We define by recursion on $i$ a $\leq^*$-decreasing chain $\langle p_i: i \leq \k_u    \rangle$
	of conditions in $\Rforce_w / \langle  (w, \lambda, A, H, h)          \rangle$
	as follows:
	
	Set $p_0=\langle  (w, \lambda, A, H, h)          \rangle.$ Given $p_i,$ let $p_{i+1} \leq^* p_i$
	decide whether there is a condition in $\Rforce_u / s^{\frown} \langle (u, \lambda', A', H', h') \rangle$
	with stem $s_i$ which decides $\sigma$ and if so, then it forces one of $\sigma$
	or $\neg \sigma.$
	At limit ordinals $i \leq \k_u,$  let $p_i$  $\leq^*$-extend all
	$p_j, j<i.$ Such a $p_i$ exists as $(\Rforce_w / \langle  (w, \lambda, A, H, h)          \rangle, \leq^*)$
	is $\k_w$-closed.
	
	By our construction,
	\begin{center}
		$\Vdash_{\Rforce_u / s^{\frown} \langle (u, \lambda', A', H', h') \rangle}$``$p_{\k_u}$ decides $\sigma$''.
	\end{center}
	By the induction hypothesis, there exists $q \leq^* s^{\frown} \langle (u, \lambda', A', H', h') \rangle$ which decides which way
	$p_{\k_u}$ decides $\sigma,$ and then $q^{\frown} p_{\k_u} \leq^* p$ decides $\sigma.$
\end{proof}
So by the above claim we are reduced to the case where $\len(p)=1$. Thus let $p=(w, \lambda, A, H, h) \in \MPB_w$.

	Given $q \in \Rforce_w,$ it can be written as $q=d_q^{\frown} \langle q_{\len(q)}  \rangle$, where $d_q \in V_{\k_w}$ and $q_{\len(q)} \in \MPB_w$.
	We set $\stem(q)=d_q$ and call it the stem of $q$.
	Let
	$\mathbf{L}$
	be the set of the stems of conditions in $\Rforce_w$ which extend $p$:
	\[
	\mathbf{L}=\{\stem(q): q \in \Rforce_w \text{~and~} q \leq p                  \}.
	\]
	
	Fix $v \in A$. Let $\mathbf{L}_v$ be the set of all stems $s=\langle q_k: k \leq n  \rangle \in \mathbf{L}$, with $q_k=(w_k, \lambda_k, A_k, H_k, h_k)$ (for $k \leq n$),  such that
	for some $A_v, H_v, h_v, A'$, $H'$ and $h'$,
	$$q=s^{\frown} \langle v, \lambda, A_v, H_v, h_v \rangle ^{\frown} \langle w, \k_v, A', H', h'    \rangle \in \Rforce_w$$
	and
	$$q \leq \Add(p, v).$$
	\begin{claim}
	$\mathbf{L}_v$ has size less than $\lambda^{+4}$.
	\end{claim}
	\begin{proof}
	Given a stem $s=\langle q_k: k \leq n  \rangle$, if
		$q \leq \Add(p, v)$ witnesses $s \in \mathbf{L}_v$, then
		 $\k_{q_n}=\k_{w^q_{n}}=\lambda$; from which the result follows immediately.
	\end{proof}
	For each $v \in A$ and each stem $s \in \mathbf{L}_v,$ define the sets $D^{\text{top}}(0, s, v)$ and $D^{\text{top}}(1, s, v)$,
	as follows:
	\begin{itemize}
		\item
		$D^{\text{top}}(0, s, v)$ is the set of all $g \leq H(\k_v)$ for which there exist  $A_v, H_v, h_v, A'$ and $H'$ such that
		$s^{\frown} \langle v, \lambda, A_v, H_v, h_v \rangle ^{\frown} \langle w, \k_v, A', H', g    \rangle$ extends $ \Add(p, v)$ and
		decides $\sigma.$
		
		\item $D^{\text{top}}(1, s, v)$ is the set of all $g \leq H(\k_v)$ such that for all $A_v, H_v, h_v, A', H'$ and $g' \leq g,$
		$s^{\frown} \langle v, \lambda, A_v, H_v, h_v \rangle ^{\frown} \langle w, \k_v, A', H', g'    \rangle$
		does not decide $\sigma.$

	\end{itemize}
	Clearly, $D^{\text{top}}(0, s, v) \cup D^{\text{top}}(1, s, v)$ is dense in $\Col(\k_v^{+4}, < \k_w)/H(\k_v)$, and so by the distributivity of
	$\Col(\k_v^{+4}, < \k_w),$ the intersection
	\[
	D^{\text{top}}_v= \bigcap_{s \in \mathbf{L}_v} (D^{\text{top}}(0, s, v) \cup D^{\text{top}}(1, s, v))
	\]
	is also dense in $\Col(\k_v^{+4}, < \k_w)/H(\k_v)$.
	Take $\tilde H \in F^*_w$ such that
	\[
	\tilde A=\{ v \in A: \tilde H(\k_v) \in D^{\text{top}}_v            \} \in \mathcal{F}_w.
	\]
	Let $H^* \in F^*_w$ extend both of $H$ and $\tilde H.$
	
	Next define the sets
	$D^{\text{low}}(0, s, v)$ and $D^{\text{low}}(1, s, v)$
	as follows:
	\begin{itemize}
		\item $D^{\text{low}}(0, s, v)$ is the set of all $g \leq h$ for which there exist  $A_v, H_v, A'$ and $H'$ such that
		$s^{\frown} \langle v, \lambda, A_v, H_v, g \rangle ^{\frown} \langle w, \k_v, A', H', H^*(\k_v)    \rangle$ extends $\Add(p, v)$ and
		decides $\sigma.$
		
		\item $D^{\text{low}}(1, s, v)$ is the set of all $g \leq h$ such that for all $A_v, H_v,  A', H'$ and $g' \leq H^*(\k_v),$
		$s^{\frown} \langle v, \lambda, A_v, H_v, g \rangle ^{\frown} \langle w, \k_v, A', H',g'    \rangle$
		does not decide $\sigma.$
	\end{itemize}
	The set $D^{\text{low}}(0, s, v) \cup D^{\text{low}}(1, s, v)$ is dense in $\Col(\l^{+4}, < \k_v)/h$, and hence by the distributivity of
	$\Col(\l^{+4}, < \k_v)$, the intersection
	\[
	D^{\text{low}}_v= \bigcap_{s \in \mathbf{L}_v} (D^{\text{low}}(0, s, v) \cup D^{\text{low}}(1, s, v))
	\]
	is also dense in $\Col(\l^{+4}, < \k_v)/h$.
	Take $ \tilde h_v \in D^{\text{low}}_v.$

	Now consider $$p'=(w, \lambda, \tilde A, H^*, h)\leq p.$$
	For any stem
	$s$ of a condition in $\Rforce_w$ extending $p'$ and every $\alpha < \len(w),$
	let $A(s, \alpha) \in \mathcal{F}_w$ be such that one of the following three possibilities hold for it:
	\begin{enumerate}
		\item [($1_{s, \alpha}$):] For every $v \in A(s, \alpha)$ there exists $q' \leq p'$ such that  $q'$ forces $\sigma$
		and $q'$ is of the form
		$$q'= s^{\frown} \langle   v, \lambda, A'_{s, v}, H'_{s, v}, h'_{s, v}      \rangle ^{\frown} \langle w, \kappa_v, A_{s, v},  H_{s, v}, h_{s,v}     \rangle,$$ for some $A'_{s, v}, H'_{s, v}, h'_{s, v} \leq \tilde h_v, A_{s, v}, H_{s, v}$ and $h_{s, v} \leq H^*(\k_v).$

		\item [($2_{s, \alpha}$):]  For every $v \in A(s, \alpha)$ there exists $q' \leq p'$ such that  $q'$ forces $\neg \sigma$
		and $q'$ is of the above form.

		\item [($3_{s, \alpha}$):] For every $v \in A(s, \alpha)$ there does not exist $q' \leq p'$ of the  above form  such that $q'$ decides  $\sigma.$
	\end{enumerate}
	For every $v$,  we may suppose that $H_{s, v}, h_{s, v}, H'_{s, v}$ and $h'_{s, v}$'s depend only on $v$, and so we denote them by $H_v, h_v, H'_v$ and $h'_v$ respectively. For each $\alpha$
	let $A(\alpha)=  \bigtriangleup_{s} A(s, \alpha)$ be the diagonal intersection of the $A(s, \alpha)$'s and set
	\[
	A^*= A' \cap \bigcup_{\alpha < \len(w)} A(\alpha) \in \mathcal{F}_w.
	\]
	Also let $$p^*= (w, \lambda, A^*, H^*, h).$$
	\begin{claim}
	If $v \in A' \cap A(s, \alpha)$
	and  one of the   $(1_{s, \alpha})$
	or $(2_{s, \alpha})$ happen, then we may take $h_v=H^*(\k_v)$ and $h'_v=\tilde h_v$.
	\end{claim}
	\begin{proof}
	If one of these possibilities happen, then
	$H^*(\k_v) \in D(0, s, v)$, so there are
	$\tilde A_v, \tilde H_v,  \tilde A'$ and $\tilde H'$ such that
	$$q=s^{\frown} \langle v, \lambda, \tilde A_v, \tilde H_v, \tilde h_v \rangle ^{\frown} \langle w, \k_v, \tilde A', \tilde H', H^*(\k_v)    \rangle\leq \Add(p, v)$$
	and
	$q$ decides $\sigma.$
	On the other hand, there exists
	\[
	q'= s^{\frown} \langle   v, \lambda, A'_{s, v}, H'_{s}, h'_{s}      \rangle ^{\frown} \langle w, \kappa_v, A_{s, v},  H_{v}, h_{v}     \rangle \leq p'
	\]
	which  decides $\sigma$ as well.
	But the conditions $q$
	and $q'$ are compatible and they decide the same truth value; hence we can take $h_v=H^*(\k_v)$ and $h'_v=\tilde h_v$.
	\end{proof}
The next claim completes the proof of the lemma.
\begin{claim}
There exists a direct extension of $p^*$ which decides $\sigma.$
\end{claim}	
	\begin{proof}
		Assume not and let $r \leq p^*$ be of minimal length which decides $\sigma,$ say it forces $\sigma$.  Let us write
	\[
	\stem(r)=s^{\frown} \langle u, \lambda, A^r, H^r, h^r     \rangle,
	\]
	where $s \in V_{\k_u}.$
	By our assumption, there exists $\alpha < \len(w)$
	such that $A(s, \alpha) \in w(\alpha)$ satisfies
	$(1_{s, \alpha})$,  so for every $v \in A(s, \alpha)$,
	there exists
	$q'_v \leq p'$ such that  $q'_v$ forces $\sigma$
	and $q'_v$ is of the form
	$$q'_v= s^{\frown} \langle   v, \lambda, A'_{v}, H'_{v}, \tilde h_{v}      \rangle ^{\frown} \langle w, \kappa_v, A_{v},  H_{v}, H^*(\k_v)     \rangle,$$ for some $A'_{v}, H'_{v}, , A_{v}$ and $H_{v}.$
	
	We show that there exists $q^* \leq p^*$ with $\stem(q^*)=s$
	such that every extension of $q^*$  is compatible with  $q'_v,$ for some $v \in A(s, \alpha).$
	This property implies
	that $q^*$ forces $\sigma$, contradicting the minimal choice of $\len(r)$. We note that by the
	definition of extension in the forcing $\Rforce_w$, we may assume from this point on that $s$ is empty.
	
	Consider the map $\phi: A(\langle \rangle, \alpha) \to V$ which is defined by
	$$\phi: v \mapsto (\phi_0(v), \phi_1(v))=(A_\nu, H_\nu).$$
	As $A(\langle \rangle, \alpha) \in w(\alpha)$, we have $w \upharpoonright \alpha \in j(A(\langle \rangle, \alpha))$ (where $j$ is the constructing embedding for $w$).
	Let
	\[
	(A^{<\alpha}, H^{<\alpha}) = j(\phi)(w \upharpoonright \alpha).
	\]
	Also let
	\[
	A^\alpha=\{v \in A(\langle \rangle, \alpha): A^{<\alpha} \cap V_{\k_v}=A_v \text{~and~} H^{<\alpha} \upharpoonright V_{\k_v}=H_v                   \}
	\]
	and
	\[
	A^{>\alpha} = \bigcup_{\alpha < \beta < \len(w)}\{v \in A^*:  A^\alpha \cap V_{\k_v} \in w(\beta)               \}.
	\]
	Then $A^{**}=A^{<\alpha} \cup A^\alpha \cup A^{>\alpha} \in \mathcal{F}_w$.
	Set $H^{**}=H^{<\alpha} \cap H^*$ and finally set
	\[
	q^*= \langle  w, \lambda, A^{**}, H^{**}, h      \rangle \leq p^*.
	\]
	We show that $q^*$ is as required. Thus let
	$$q= \langle (w_k, \lambda_k, A_k, H_k, h_k): k \leq n                  \rangle$$
	be an extension of $q^*.$ There are various cases:
	\begin{enumerate}
		\item There is no index $k$ such that $\len(u_k)>0$ and $(A^\alpha \cup A^{>\alpha}) \cap V_{\k_{w_k}} \in \bigcup_{\beta < \len(w_k)} w_k(\beta).$
		Then pick some non-trivial measure sequence $v \in A^\alpha \cap A_n,$ and note that for all $k<n,$
		$A^{<\alpha} \cap A_k \in \bigcap_{\beta < \len(w_k)}w_k(\beta)$. Then one can easily show that $q$ is compatible with $q'_v$.

		\item There is an index $k$ with $\len(u_k)>0$ and $(A^\alpha \cup A^{>\alpha}) \cap V_{\k_{w_k}} \in \bigcup_{\beta < \len(w_k)} w_k(\beta)$
		and $A^\alpha \in w_k(\beta)$ for some $\beta < \len(w_k).$
		Let us pick $k$ to be the least such an index.
		Let $v \in A_k$ be such that $A^{<\alpha} \cap A_k \in \bigcap_{\beta < \len(v)} v(\beta)$.
		Then $q$ is compatible with $q'_v$.
		
		\item There is an index $k$ with $\len(u_k)>0$ and $(A^\alpha \cup A^{>\alpha}) \cap V_{\k_{w_k}} \in \bigcup_{\beta < \len(w_k)} w_k(\beta)$
		and $A^{>\alpha} \in w_k(\beta)$ for some $\beta < \len(w_k).$
		Then by our choice of $A^{>\alpha},$ there is some $v \in A_k$
		that can be added to $q$ such that we reduce to the  case (2).
	\end{enumerate}
	\end{proof}
	The lemma follows.
\end{proof}

Now suppose that $w=u \upharpoonright \kappa^+$, where $u$ is the measure sequence constructed by the pair $(j, F)$ and let $G \subseteq \Rforce_w$ be generic over $V$. Set
\begin{center}
$C=\{\kappa_u \mid \exists p \in G, \exists i < \len(p), p_i= (u, \lambda, A, H, h)  \}.$
\end{center}
By standard arguments, $C$ is a club of $\kappa.$
Let $\langle \kappa_i: i<\kappa \rangle$ be the increasing enumeration of the club $C$ and let $\vec{u}=\langle u_i \mid i<\kappa \rangle$
be the enumeration of $\{u \mid \exists p \in G, \exists i< \len(p), p_i= (u, \lambda, A,H, h) \}$ such that for $i<j<\kappa, \kappa_{u_i}=\kappa_i < \kappa_j=\kappa_{u_j}.$ Also let $\vec{F}=\langle F_i \mid i<\kappa \rangle$ be such that each $F_i$ is $\Col(\kappa_i^{+4}, <\kappa_{i+1})$-generic over $V$ produced by $G.$
 \begin{lemma}\label{thm: bounding sets}
 \begin{enumerate}
\item [(a)] $V[G]=V[\vec{u}, \vec{F}].$

\item [(b)] For every limit ordinal $j<\kappa, \langle \vec{u}\upharpoonright j, \vec{F}\upharpoonright j  \rangle$  is $\RR_{u_j}$-generic over $V$,
and $\langle \vec{u} \upharpoonright [j, \kappa), \vec{F}\upharpoonright$
 $[j, \kappa) \rangle$ is $\RR_w$-generic over $V[\vec{u}\upharpoonright j, \vec{F}\upharpoonright j ].$

\item [(c)] For every $\gamma <\kappa$ and every $A \subseteq \gamma$ with $A\in V[\vec{u}, \vec{F}],$ we have $A\in V[\vec{u}\upharpoonright j, \vec{F}\upharpoonright j],$
 where $j$ is the least ordinal such that $\gamma< \kappa_j.$
\end{enumerate}
\end{lemma}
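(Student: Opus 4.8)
The three parts are closely linked and I would prove them together by induction on the structure of the generic object, using the factorization lemma (Lemma \ref{thm:factorization lemma}) as the main engine. For part (a), the point is that the sequences $\vec u$ and $\vec F$ are read off $G$ by the definitions given just before the lemma, so $V[\vec u,\vec F]\subseteq V[G]$ is immediate; conversely, a condition $p\in G$ is (up to the finitely many ``trace'' coordinates that are themselves determined by $\vec u$) recoverable from $\vec u$ and $\vec F$ together with the measures $\mathcal F_{u_i}$, which live in $V$, so $G$ can be reconstructed in $V[\vec u,\vec F]$; this is the standard Radin-forcing argument that the generic club plus the interleaved collapse generics generate the whole extension, and I would cite \cite{cummings}, \cite{gol-mitchell} for the details rather than reproduce them.

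For part (b), fix a limit ordinal $j<\kappa$. Since $j$ is a limit point of the club $C$, the measure sequence $u_j$ with $\kappa_{u_j}=\kappa_j$ appears as the top coordinate $w_m$ of some (in fact, densely many) $p\in G$, and for such $p$ the factorization $p=p^{\le m}\,^\frown\,p^{>m}$ from Lemma \ref{thm:factorization lemma}(a) gives an isomorphism $\Rforce_w/p\cong \Rforce_{u_j}/p^{\le m}\times \Rforce_w/p^{>m}$ respecting both orders. Pushing $G$ through this isomorphism splits it as a pair of mutually generic filters: the left coordinate is $\Rforce_{u_j}$-generic over $V$ and, by a genericity-over-the-other-factor argument, the right coordinate is $\Rforce_w$-generic over $V[\text{left}]$. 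It remains to identify the left coordinate with $\langle \vec u\restricted j,\vec F\restricted j\rangle$ and the right coordinate with $\langle\vec u\restricted[j,\kappa),\vec F\restricted[j,\kappa)\rangle$; this is exactly the part-(a) reconstruction applied inside each factor, noting that the conditions of $\Rforce_{u_j}$ only ever mention measure sequences $v$ with $\kappa_v\le\kappa_j$ and collapses below $\kappa_j$, so the club and collapse data they produce are precisely the initial segments indexed by ordinals $<j$.

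For part (c), given $A\subseteq\gamma$ in $V[\vec u,\vec F]$ with $j$ least such that $\gamma<\kappa_j$, use part (b) to write $V[G]=V[\vec u\restricted j,\vec F\restricted j][H]$ where $H$ is $\Rforce_w$-generic over the inner model $W:=V[\vec u\restricted j,\vec F\restricted j]$. By the factorization and chain-condition lemmas, $\Rforce_w$ as computed over $W$ has a dense subset all of whose conditions have top coordinate $w$ with $\lambda$-part at least $\kappa_j$, and the ``tail'' part $\Rforce_w/p^{>m}$ is $\le^*$-closed below $\kappa_j$ while the finite ``stem'' contributes only collapses of cardinals $\ge\kappa_j^{+4}$; hence this tail forcing is, over $W$, ${\le}\kappa_j$-distributive — more precisely it adds no new subsets of $\gamma$, because any name for a subset of $\gamma$ can be decided using the direct-extension order by the Prikry property (Lemma \ref{thm:prikry property}) together with the fact that the relevant collapses are $\kappa_j$-closed. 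Therefore $A\in W=V[\vec u\restricted j,\vec F\restricted j]$, as desired.

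The main obstacle is the genericity bookkeeping in part (b): one has to check that the right-hand factor of the factorization is generic not merely over $V$ but over $V[\vec u\restricted j,\vec F\restricted j]$, and that the two halves of the split filter are exactly the claimed restrictions of $\vec u,\vec F$ rather than some other generics — this requires knowing that dense sets of $\Rforce_w$ over the inner model can be met by tail conditions, which in turn leans on the mutual genericity extracted from the product factorization. The distributivity argument in (c) is then a routine consequence of the Prikry property plus closure of the interleaved collapses, so I expect that to go through without surprises once (b) is in hand.
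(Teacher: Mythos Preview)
The paper gives no proof of this lemma; it is stated as a standard fact about Radin forcing with interleaved collapses, with the arguments implicitly deferred to the references (Cummings \cite{cummings}, Gitik \cite{gitik}, and \cite{gol-mitchell}). Your outline is the expected one and is essentially correct: part (a) is the usual reconstruction of the Radin generic from the club and the collapse generics, part (b) is the factorization lemma applied at a condition mentioning $u_j$, and part (c) is the Prikry property plus closure of the tail.

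One small point worth tightening: in your argument for (c) you invoke part (b) to split $V[G]$ at level $j$, but part (b) is stated only for \emph{limit} $j$, whereas the least $j$ with $\gamma<\kappa_j$ may well be a successor $j=i+1$. In that case you should factor directly at the condition coordinate carrying $u_i$ and use part (b) of the factorization lemma (Lemma \ref{thm:factorization lemma}) to peel off the interleaved collapse $\Col(\kappa_i^{+4},<\kappa_{i+1})$; then $V[\vec u\restricted(i+1),\vec F\restricted(i+1)]$ is $V[\vec u\restricted i,\vec F\restricted i][F_i]$ together with the single datum $u_i$, and the remaining tail (with bottom coordinate having $\lambda$-part $\kappa_{i+1}$) adds no subsets of $\gamma<\kappa_{i+1}$ by the Prikry property and $\kappa_{i+1}^{+4}$-closure of the direct-extension order. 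This is a routine adjustment rather than a genuine gap, but as written your (c) literally appeals to a case of (b) that is not available.
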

We now state a geometric characterization of generic filters for $\MRB_{w}$. Such a characterization was first given by Mitchell \cite{mitchell82} for Radin forcing. The characterization given bellow is essentially due to Cummings \cite{cummings}.
\begin{lemma} (Geometric characterization)
	\label{geometric characterization}
	The pair $(\vec{u}, \vec{F})$ is $\MRB_{w}$-generic over $V$ if and only if it satisfies the following conditions:
	\begin{enumerate}
		\item If $\xi < \k$ and $\len(u_\xi)>1,$ then the pair $(\vec{u} \upharpoonright \xi, \vec{F} \upharpoonright \xi)$ is $\MRB_{u_\xi}$-generic over $V$.
		
		\item For all $A \in V_{\kappa+1}~(A \in \mathcal{F}_w \iff \exists \a<\kappa~ \forall \xi>\a,~ u_\xi \in A)$.
		
		\item For all $f \in w(1)$ there exists $\a < \kappa$ such that $ \forall \xi>\a, f(\k_\xi) \in F_\xi.$
	\end{enumerate}
\end{lemma}
\

As $\len(w)=\kappa^+,$ it follows from Mitchell \cite{mitchell82} (see also \cite{gitik}) that
\begin{lemma}  \label{thm:preserving inaccessibility}
In $V[G]$,	$\kappa$ becomes the least strongly inaccessible cardinal above $\kappa_0.$
\end{lemma}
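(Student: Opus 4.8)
The plan is to follow Mitchell's analysis of Radin forcing and adapt it to the interleaved-collapse version $\Rforce_w$ with $\len(w)=\kappa^+$. First I would recall that $\kappa$ is regular in $V[G]$: this follows from Lemma~\ref{thm:chain condition}, since $\Rforce_w$ has the $\kappa^+$-c.c.\ (so cofinalities $\geq\kappa^+$ are preserved) and from Lemma~\ref{thm:prikry property} together with the factorization lemma, which shows that for any $\lambda<\kappa$ the forcing factors as a small part in $V_\kappa$ followed by a tail that adds no new bounded subsets of $\kappa$ below the relevant critical point; alternatively, one invokes Lemma~\ref{thm: bounding sets}(c), which says every bounded subset of $\kappa$ in $V[G]$ appears in an initial segment $V[\vec u\restricted j,\vec F\restricted j]$, so no cofinal map from a small ordinal into $\kappa$ can be added. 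Hence $\kappa$ is regular, and in fact $\cf^{V[G]}(\kappa)=\kappa$.

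Next I would show $\kappa$ is a limit cardinal in $V[G]$, i.e.\ that every $\gamma<\kappa$ is not a cardinal's predecessor — more precisely that $(\gamma^+)^{V[G]}<\kappa$ for all $\gamma<\kappa$. By Lemma~\ref{thm: bounding sets}(c), $\ps(\gamma)^{V[G]}=\ps(\gamma)^{V[\vec u\restricted j,\vec F\restricted j]}$ where $\kappa_j$ is the least club point above $\gamma$, so it suffices to bound the cardinality of $\ps(\gamma)$ computed in that intermediate model by some ordinal below $\kappa$. The intermediate model is a generic extension of $V$ by $\Rforce_{u_j}$, a forcing of size $\kappa_j<\kappa$; so $|\ps(\gamma)^{V[\vec u\restricted j,\vec F\restricted j]}|\leq(2^{\kappa_j})^V<\kappa$ since $\kappa$ is inaccessible in $V$. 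Therefore $(\gamma^+)^{V[G]}<\kappa$, and combined with regularity, $\kappa$ is a strong limit cardinal, hence inaccessible, in $V[G]$.

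The step I expect to be the main obstacle is making the appeal to the length hypothesis $\len(w)=\kappa^+$ actually do its job, which is exactly where Mitchell's theorem is needed: one must verify that the generic club $C$ does \emph{not} reflect too much structure — specifically that $\kappa$ does not become singular or even become, say, a successor in $V[G]$ via some unforeseen collapse arising from the interleaved $\Col(\kappa_i^{+4},<\kappa_{i+1})$ factors. The point is that along $C$ the measure sequences $u_i$ have lengths tending to $\kappa^+$ in a suitably generic way, so that the forcing genuinely behaves like Radin forcing of maximal length and adds a club through the old regular cardinals without collapsing $\kappa$; the interleaved collapses only collapse the intervals $(\kappa_i^{+4},\kappa_{i+1})$ and thus do not threaten $\kappa$ itself. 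Rather than reprove this, I would cite Mitchell~\cite{mitchell82} (and \cite{gitik}) for the core argument and simply note that the factorization and Prikry properties established above (Lemmas~\ref{thm:factorization lemma} and~\ref{thm:prikry property}), together with the $\kappa^+$-c.c.\ and Lemma~\ref{thm: bounding sets}, are precisely the hypotheses under which that argument goes through verbatim for $\Rforce_w$. This completes the proof that $\kappa$ is strongly inaccessible in $V[G]$. \hfill$\Box$
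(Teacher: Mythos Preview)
The paper gives no proof of this lemma at all; it simply invokes Mitchell~\cite{mitchell82} (with a pointer to Gitik~\cite{gitik}) from the hypothesis $\len(w)=\kappa^+$. Since you ultimately fall back on exactly that citation, your proposal ends up in the same place as the paper.

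That said, the direct argument you sketch for the regularity of $\kappa$ before invoking Mitchell does not work, and it is worth understanding why. The properties you appeal to---$\kappa^+$-c.c., the Prikry property, factorization, and Lemma~\ref{thm: bounding sets}(c)---hold for $\Rforce_w$ with \emph{any} measure sequence $w$, in particular for $\len(w)=2$, i.e.\ ordinary Prikry forcing with interleaved collapses, which singularizes $\kappa$ to cofinality~$\omega$. The specific faulty step is the inference ``every bounded subset of $\kappa$ appears in some $V[\vec u\restricted j,\vec F\restricted j]$, so no cofinal map from a small ordinal into $\kappa$ can be added'': a cofinal map $f\colon\lambda\to\kappa$ is not a bounded subset of $\kappa$, so Lemma~\ref{thm: bounding sets}(c) says nothing about it. The regularity of $\kappa$ is the entire nontrivial content of Mitchell's theorem and is exactly where the hypothesis $\len(w)=\kappa^+$ enters; it cannot be read off from the general structural lemmas you list. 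Your later paragraph acknowledging this tension is the correct diagnosis; the earlier paragraph claiming regularity outright should be dropped.

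Your strong-limit argument, by contrast, is correct and is more explicit than anything the paper provides.
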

\begin{proof}
	Let us first show that $\kappa$ remains inaccessible in $V[G]$. We follow Cummings \cite{cummings}.
	Suppose not and let $p \in \Rforce_w, \delta < \kappa$ and $\dot{f}$ be such that
	\begin{center}
		$p \Vdash$``$\dot{f}: \delta \to \k$ is cofinal''.
	\end{center}
	Let $\theta > \kappa$ be large enough regular such that $p, \dot{f}, w, \Rforce_w \in H(\theta)$
	and let $\mathbf{X} \prec H(\theta)$
	be such that
	\begin{enumerate}
		\item $p, \k^+, \dot{f}, w, \Rforce_w \in \mathbf{X}.$
		\item $V_{\k} \subseteq \mathbf{X}.$
		\item $^{<\k}$$\mathbf{X} \subseteq \mathbf{X}.$
		\item $|\mathbf{X}|=\k$.
	\end{enumerate}
	Let $\pi: \mathbf{X} \to \mathbf{N}$ be the Mostowski collapse of $\mathbf{X}$ onto a transitive model $\mathbf{N}.$ Note that
	$\pi \upharpoonright \mathbf{X} \cap V_{\k+1} =id \upharpoonright \mathbf{X} \cap V_{\k+1}.$

	Let $v=\pi(w)$ and $\beta=\pi(\k^+).$ Then
	\[
	\pi(\mathcal{F}_w) = \mathcal{F}_w \cap \mathbf{X} = \mathcal{F}_w \cap \mathbf{N}
	\]
	and
	\[
	\forall \alpha  \in \mathbf{X} \cap \k^+,~  \pi(v(\alpha))=v(\pi(\alpha)) = v(\alpha) \cap \mathbf{X} = w(\alpha  ) \cap \mathbf{N}.
	\]
	Let $\bar \beta = \sup (\mathbf{X} \cap \k^+) < \k^+.$ Using  Lemma \ref{geometric characterization},
	if $K$ is $\Rforce_{w \upharpoonright  \gamma}$-generic over $V$, where $\bar \beta \leq \gamma < \k^+$, then $K$ is $\pi(\Rforce_w)$-generic over $\mathbf{N}$.
	But note that for any limit ordinal $\gamma$ as above with $\cf(\gamma)< \k,$ we have
	\[
	\Vdash_{\Rforce_{w \upharpoonright  \gamma}} \text{``}\cf(\k)=\cf(\gamma)\text{''}.
	\]
	We get a contradiction and hence $\kappa$ remains regular. Clearly, $\kappa$ remains strong limit. Let us now show that it is the least inaccessible cardinal above $\kappa_0$.
	By Lemmas \ref{thm:chain condition}, \ref{thm:factorization lemma}
	and \ref{thm: bounding sets},
	\[
	CARD^{V[G]} \cap [\kappa_0, \kappa) = \lim(C) \cup \{\alpha, \alpha^+, \alpha^{++}, \alpha^{+3}, \alpha^{+4} \mid \alpha \in C     \}.
	\]
	As every limit point of $C$ is singular in $V[G],$\footnote{Given any limit ordinal $i< \kappa$,  $\len(u_i)< \kappa_i^+$. If $cf(\len(u_i)) < \kappa_i$ then $cf^{V[G]}(\kappa_i)=cf(\len(u_i)) < \kappa_i$, and if  $cf(\len(u_i))=\kappa_i$ then $\cf(\kappa_i)=\omega < \kappa_i$.}  $\kappa$ is the least inaccessible cardinal above $\kappa_0.$
\end{proof}

\section{Diamond at $\kappa$ in $V[G]$}
\label{sec:diamond}
In this section we show that $\Diamond_\kappa$ fails in $V[G]$. It is worth noting that to get this result, we do not need the whole assumptions about the ground model $V$ made at the beginning of Section \ref{sec:radin forcing}.

\begin{lemma} \label{thm:diamond in extension}
$\Diamond_\kappa$ fails in $V[G]$.
\end{lemma}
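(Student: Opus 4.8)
The plan is to isolate the relevant cardinal arithmetic and then carry out Woodin's diagonalisation argument, adapted to the interleaved‑collapse forcing $\Rforce_w$ with $w=u\upharpoonright\kappa^+$. Throughout I keep the objects $C$, $\langle\kappa_i\mid i<\kappa\rangle$, $\vec u$, $\vec F$ of Lemma~\ref{thm: bounding sets}, and for a limit ordinal $j<\kappa$ I write $V_j:=V[\vec u\upharpoonright j,\vec F\upharpoonright j]$, which by Lemma~\ref{thm: bounding sets}(b) is an $\Rforce_{u_j}$‑generic extension of $V$. \emph{Step 1 (Arithmetic).} First I would check that $2^\kappa=\kappa^{++}$, hence in particular $2^\kappa>\kappa^+$, in $V[G]$. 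In $V$ we have $2^\kappa=\kappa^{++}$, and $|\Rforce_w|=\kappa^{++}$: in a condition the measure‑sequence coordinate is the fixed $w$, the coordinates $\lambda$ and $h$ and the whole lower part of the sequence range over a set of size $\kappa$, and the only large coordinates are $A\in\mathcal F_w\subseteq\Pset(V_\kappa)$ and $H\in F^*_w$ (a function into collapse posets of size $\kappa$), each allowing $2^\kappa=\kappa^{++}$ possibilities. Since $\Rforce_w$ has the $\kappa^+$‑c.c.\ (Lemma~\ref{thm:chain condition}), a count of nice names bounds $2^\kappa$ in $V[G]$ by $(\kappa^{++})^\kappa=\kappa^{++}$, while $\Pset(\kappa)^V\subseteq\Pset(\kappa)^{V[G]}$ together with the preservation of $\kappa^{++}$ (Lemma~\ref{thm:preserving inaccessibility}) gives the reverse inequality. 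This failure of $2^\kappa=\kappa^+$ is the feature to be exploited, in analogy with the successor case, where $\Diamond_{\mu^+}$ forces $2^\mu=\mu^+$.

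\emph{Step 2 (Localisation along $C$).} Assume toward a contradiction that $\vec S=\langle S_\alpha\mid\alpha<\kappa\rangle\in V[G]$ is a $\Diamond_\kappa$‑sequence; fix a name $\dot{\vec S}=\langle\dot S_\alpha\mid\alpha<\kappa\rangle$ and a condition $p\in G$ forcing that $\dot{\vec S}$ is a $\Diamond_\kappa$‑sequence. For a limit $j$, the passage from $V_j$ to $V_{j+1}$ is through the $\kappa_j^{+4}$‑closed collapse $\Col(\kappa_j^{+4},<\kappa_{j+1})$ (Lemma~\ref{thm:factorization lemma}(b)), so no new subset of $\kappa_j$ is added past stage $j$; together with Lemma~\ref{thm: bounding sets}(c) this yields $\Pset(\kappa_j)^{V[G]}=\Pset(\kappa_j)^{V_j}$, so that being stationary in $\kappa_j$ is absolute between $V_j$ and $V[G]$. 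Hence for every $\alpha=\kappa_j\in\lim(C)$ the set $S_\alpha$ and every $X\cap\alpha$ with $X\in\Pset(\kappa)^{V[G]}$ already lie in $V_j$, a model which (as $\kappa_j$ is measurable in $V$, and this is preserved) satisfies $2^{\kappa_j}\ge\kappa_j^{++}$. The task therefore reduces to producing a single $X^*\in\Pset(\kappa)^{V[G]}$ for which $\{\alpha\in\lim(C)\mid X^*\cap\alpha\ne S_\alpha\}$ contains a club.

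\emph{Step 3 (Diagonalisation).} Here I would follow Woodin's unpublished argument \cite{cummings99}. The construction of $X^*$ is a genericity argument: working below $p$ in the direct‑extension order and using the Prikry property (Lemma~\ref{thm:prikry property}) and the factorisation (Lemma~\ref{thm:factorization lemma}), one builds a name $\dot X^*$, a name $\dot D$ for a club, and a single condition $q\le p$ forcing ``$\dot D$ is a club'' and ``$\dot X^*\cap\alpha\ne\dot S_\alpha$ for all $\alpha\in\dot D$''. The reason such a $q$ exists is that the hypothetical $\Diamond$‑sequence has far too little room: by Step~1 there are $\kappa^{++}$‑many subsets of $\kappa$ to be guessed, by Step~2 each is pinned down locally at the points of $C$ inside the corresponding rung $V_j$ — where $\dot S_{\kappa_j}$ is decided already by the part of the generic below $\kappa_j$ — while the Radin generic still adds, over each $V_j$, the ``fresh'' collapse block $F_j$; exploiting this freedom one drives $\dot X^*$ off $\dot{\vec S}$ cofinally and then a fusion keeps it off on an entire club. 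This contradicts $\dot{\vec S}$ being a $\Diamond_\kappa$‑sequence, proving the lemma.

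The step I expect to be the real obstacle is Step~3: one must organise the fusion so that a \emph{single} condition forces $X^*$ to disagree with $\vec S$ on a whole club rather than merely cofinally often. This is the technical core of Woodin's theorem, and it is exactly here that both the length $\kappa^+$ of the measure sequence (which keeps $\kappa$ regular, by Lemma~\ref{thm:preserving inaccessibility}, and gives $\mathcal F_w$ the completeness needed for the fusion) and the inequality $2^\kappa>\kappa^+$ are genuinely used. Steps~1 and~2 should be routine given the lemmas already established in this section.
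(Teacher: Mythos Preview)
Your Steps~1 and~2 are fine, and you correctly flag Step~3 as the crux; but the mechanism you sketch there is not the right one, and as written it cannot be completed.

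First, the collapse blocks $F_j$ play no role in the diamond argument.  You propose to ``exploit the freedom'' coming from the fresh $F_j=\Col(\kappa_j^{+4},<\kappa_{j+1})$-generic to drive $X^*\cap\kappa_j$ away from $S_{\kappa_j}$.  But by your own Step~2, for any $X^*\in V[G]$ we have $X^*\cap\kappa_j\in V_j$ already, and $F_j$ is added \emph{after} $V_j$; so $F_j$ cannot influence $X^*\cap\kappa_j$ at all.  Second, no fusion is needed, and the witness $X^*$ is not built in the extension: in the paper's argument it is a \emph{ground-model} set.

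The actual engine is an application of the embedding $j$ together with a counting argument.  Below $p$, use factorisation and the Prikry property to find $p^*\le^* p$ and, for each $u$ in the top measure-one set $A^*$, an $\Rforce_u$-name $\tau_u$ such that $\Add(p^*,u)\Vdash\dot S_{\kappa_u}=\tau_u$.  Now apply $j$ to the map $\tau\colon u\mapsto\tau_u$.  For each $\alpha<\kappa^+$ one obtains $j(\tau)_{w\upharpoonright\alpha}$, an $\Rforce_{w\upharpoonright\alpha}$-name for a subset of $\kappa$; by the $\kappa^+$-c.c.\ each such name has at most $\kappa$ possible values.  Thus only $\kappa\cdot\kappa^+=\kappa^+$ many subsets of $\kappa$ arise this way, and since $2^\kappa=\kappa^{++}>\kappa^+$ there is $S\subseteq\kappa$ in $V$ with $\Vdash_{\Rforce_{w\upharpoonright\alpha}}\check S\neq j(\tau)_{w\upharpoonright\alpha}$ for every $\alpha<\kappa^+$.  \L o\'s's theorem then reflects this: for every $0<\alpha<\kappa^+$ the set $A^{**}=\{u\in A^*\mid \Add(p^*,u)\Vdash \dot S_{\kappa_u}\neq\check S\cap\kappa_u\}$ lies in $w(\alpha)$, hence $A^{**}\in\mathcal F_w$.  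Shrinking $p^*$ to $p^{**}$ with top measure-one set $A^{**}$ gives a single condition forcing $\dot C\cap\{\alpha\mid \dot S_\alpha=\check S\cap\alpha\}$ bounded in $\kappa$.  So the passage from ``cofinally'' to ``on a club'' is automatic --- it comes from $A^{**}\in\mathcal F_w$, not from a fusion --- and this is precisely where the length $\kappa^+$ of $w$ is used (one needs $A^{**}$ to lie in \emph{every} $w(\alpha)$, $0<\alpha<\kappa^+$).
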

\begin{remark}
More generally, as the proof of Lemma \ref{thm:diamond in extension} shows, if $2^\kappa$ in the ground model is greater than the length of the measure sequence $w$, then $\Diamond_\kappa$ fails in the generic extension by $\Rforce_w$.
\end{remark}
\begin{proof}
As in Woodin \cite{cummings99}, we show that there is no diamond sequence in the extension which guesses every subset of $\kappa$ from  the ground model $V$.
Assume not and let $\langle  \dot{S}_\alpha \mid \alpha < \kappa  \rangle$ be an $\Rforce_w$-name for a diamond sequence at $\kappa$.

Let $p \in \Rforce_w$ and write it as $p= \vec{d}^{\frown} (w, \lambda, A, H , h),$ where $\vec{d} \in V_\kappa.$  We may further assume that for each
 $u \in A,$  $\Add(p,u)$ is defined.  Thus for each $u \in A$, we can form the condition
$\Add(p, u)= \vec{d}^{\frown} \langle p^u_0, p^u_1  \rangle \in \Rforce_w$, where
\[
p^u_0 = (u, \lambda, A \cap V_{\kappa_u}, H \upharpoonright V_{\kappa_u}, h)
\]
and
\[
p^u_1 = (w, \kappa_u, A \setminus  V_{\eta_u}, H \upharpoonright \dom(H) \setminus V_{\eta_u}, H(\kappa_u)),
\]
where $\eta_u = \sup\range (H(\kappa_u))$.

By the factorization lemma \ref{thm:factorization lemma},
\[
\Rforce_w / \Add(p, u) \simeq (\Rforce_u / \vec{d}^{\frown}p^u_0) \times (\Rforce_w / p^u_1).
\]
As forcing with $\Rforce_w / p^u_1$ does not add new subsets to $\kappa_{u},$, we can look at $\dot{S}_{\kappa_u}$
as an $\Rforce_w$-name for an $\Rforce_u$-name for a subset of $\kappa_u.$ So by the Prikry property \ref{thm:prikry property}, we can find
$q^u_1 = (w, \lambda, A_u, H_u, h_u) \leq^* p^u_1$ and an $\Rforce_u$-name $\tau_u$ such that $\vec{d}^{\frown} \langle p^u_0, q^u_1  \rangle$ forces
 $\dot{S}_{\kappa_u}$ to be the realization of $\tau_u$ by the generic up to the point $u$, i.e.,
 \[
 \vec{d}^{\frown} \langle p^u_0, q^u_1  \rangle \Vdash \text{~``} \dot{S}_{\kappa_u} = \tau_u \text{''}.
 \]
Let $A_1= \bigtriangleup_{u \in A} A_u = \{w' \in \mathcal{U}_\infty \cap V_\kappa \mid \forall u \in A,~ \kappa_u < \kappa_{w'} \Rightarrow w' \in A_u    \} \in \mathcal{F}_w$
be the diagonal intersection of $A_u$'s, $u \in A$.

Now consider the sequence $\langle  [H_u]_{\mu_w}: u \in A_1      \rangle \subseteq F_w$. Let $H^* \in F^*_w$ be such that $[H^*]_{\mu_w} \leq [H_u]_{\mu_w}$  for all
$u \in A_1.$
 Thus for each $u \in A_1,$ we can find  a measure one set $B_u \in \mathcal{F}_w$ on which $H^*$ extends $H_u$. Set
$A_2 = A_1 \cap  \bigtriangleup_{u \in A_1} B_u.$

Now let $A^* \in \mathcal{F}_w, A^* \subseteq A_2$, be such that for all $w', w'' \in A^*, h_{w'}$
and $h_{w''}$ are compatible.
Set
\[
p^* =\vec{d}^{\frown} (w, \lambda, A^*, H^*, h) \leq^* p.
\]
Then $p^* \leq^* p$ and for every $u \in A^*,$ we have $\Add(p^*, u)$ and
\[
\Add(p^*, u) \Vdash  \text{~``} \dot{S}_{\kappa_u} = \tau_u \text{''}.
\]
Let $\tau$ be defined on $A^*$ so that $\tau: u \mapsto \tau_u.$ In $M$, consider the map
$j(\tau).$ Then for each $\alpha < \kappa^+,$ we will have
$j(\tau)_{u \upharpoonright \alpha}$ which is an $\Rforce_{u \upharpoonright \alpha}$-name for a member of $P(\kappa) \cap M = P(\kappa) \cap V.$

By the chain condition property \ref{thm:chain condition}, there are at most $\kappa$ possibilities for the value of this name, so all in all we have at most $\kappa^+$ possibilities for the values of $j(\tau)_{u \upharpoonright \alpha}$ for every $\alpha < \kappa^+$, and since $2^\kappa > \kappa^+,$
we can find $S \subseteq \kappa, S \in V,$ such that for all $\alpha < \kappa^+, \Vdash_{\Rforce_{u \upharpoonright \alpha}}$``$\check{S} \neq j(\tau)_{u \upharpoonright \alpha}$''.

Consider $j(p^*)= \vec{d} ^{\frown} (j(w), \lambda, j(A^*), j(H^*), j(h)).$ For each $\alpha < \kappa^+,$
$\Add(j(p^*), w \upharpoonright \alpha) \in \Rforce^M_{j(w)}$
and it forces that $j(\dot{S})_\kappa$ is the realization of $j(\tau)_{u \upharpoonright \alpha}$. So this condition forces that
$j(\dot{S})_\kappa \neq \check{S}$,  and since $S=j(S) \cap \kappa,$ it follows from {\L}o\'{s}'s theorem that  for each $0 < \alpha < \kappa^+,$
\[
A^{**} = \{ u \in A^* \mid \Add(p^*, u) \Vdash \text{~``~} \dot{S}_{\kappa_u} \neq \check{S} \cap \kappa_u                           \} \in w(\alpha).
\]
Let $p^{**}= \vec{d}^{\frown}(w, \lambda, A^{**}, H^* \upharpoonright A^{**}, h)$, where $H^* \upharpoonright A^{**} = H^* \upharpoonright \{ \kappa_u > \lambda \mid \kappa_u \in A^{**}      \}$.

Then
\begin{center}
$p^{**} \Vdash$``$\dot{C} \cap \{\alpha < \kappa: \dot{S}_\alpha = \check{S} \cap \alpha       \}$  is bounded in $\kappa$''.
\end{center}
 We get a contradiction and the lemma follows.
 \end{proof}

Force over $V[G]$ with $\Col(\omega, \kappa_0)$ and let $H$ be $\Col(\omega, \kappa_0)$-generic over $V[G]$. As the forcing is small,
we can easily show that
\begin{center}
$V[G][H] \models$``$\kappa$ is the least inaccessible cardinal and $\Diamond_\kappa$ fails''.
\end{center}
So in $V[G][H]$, $\Diamond_\kappa$ fails.

\section{Weak diamond at $\kappa$ in $V[G]$}
\label{sec:weak diamond}
In this section, we  improve the conclusion of the last section, by showing that in fact $\Phi_\kappa$
 fails in the model constructed above. It suffices to prove the following.
 \begin{lemma} \label{thm:weak diamond in extension}
$\Phi_\kappa$ fails in $V[G].$
\end{lemma}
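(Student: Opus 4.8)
The plan is to mimic the proof of Lemma~\ref{thm:diamond in extension}, replacing the diamond sequence by a weak diamond function $c\colon 2^{<\kappa}\to 2$ and showing that no $g\colon\kappa\to 2$ in $V[G]$ can be a weak-diamond guessing function for $c$. As in the diamond case, it suffices to defeat $g$ using only functions $f\colon\kappa\to 2$ that come from the ground model $V$; this is where $2^\kappa>\kappa^+$ is used again, since after fixing $c$ and $g$ we will have $\kappa^+$-many ``candidate'' names to avoid but $2^\kappa$-many ground-model functions to choose from. So suppose for contradiction that $p\in\Rforce_w$ forces $\dot c$ and $\dot g$ to be a counterexample pair. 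Write $p=\vec d\,{}^\frown(w,\lambda,A,H,h)$ with $\vec d\in V_\kappa$, shrinking $A$ so that $\Add(p,u)$ is defined for every $u\in A$.

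First I would, for each $u\in A$, use the factorization lemma~\ref{thm:factorization lemma} to split $\Rforce_w/\Add(p,u)\simeq(\Rforce_u/\vec d\,{}^\frown p^u_0)\times(\Rforce_w/p^u_1)$, noting that the tail part $\Rforce_w/p^u_1$ adds no bounded subsets of $\kappa_u$. Hence both $\dot c\restriction 2^{<\kappa_u}$ and $\dot g(\kappa_u)$ can be read as $\Rforce_u$-names below the top part. Using the Prikry property~\ref{thm:prikry property} I would find $q^u_1\le^* p^u_1$, an $\Rforce_u$-name $\sigma_u$ for a function $2^{<\kappa_u}\to 2$, and a value $\epsilon_u\in 2$ such that $\vec d\,{}^\frown\langle p^u_0,q^u_1\rangle$ forces $\dot c\restriction 2^{<\kappa_u}=\sigma_u$ and $\dot g(\kappa_u)=\epsilon_u$. (Here one also uses that $\Col(\kappa^{+4}_{w'},<\kappa_w)$-parts are highly closed so the relevant small objects are decided by direct extensions.) Then, exactly as in Lemma~\ref{thm:diamond in extension}, I would diagonalize: take the diagonal intersection $A_1=\bigtriangleup_{u\in A}A_u$ of the measure-one sets produced, find a single $H^*\in F^*_w$ dominating all $[H_u]_{\mu_w}$, pass to $A_2$ where $H^*$ genuinely extends each $H_u$, and then to $A^*\subseteq A_2$ on which all the collapse-coordinates $h_u$ are pairwise compatible, obtaining $p^*=\vec d\,{}^\frown(w,\lambda,A^*,H^*,h)\le^* p$ with $\Add(p^*,u)\Vdash$``$\dot c\restriction 2^{<\kappa_u}=\sigma_u$ and $\dot g(\kappa_u)=\epsilon_u$'' for all $u\in A^*$.

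Next I would push the names $\sigma$ and the bits $\epsilon$ up via $j$. Let $\sigma\colon u\mapsto\sigma_u$ and $\epsilon\colon u\mapsto\epsilon_u$ on $A^*$; then in $M$ the object $j(\sigma)$ assigns to each $\alpha<\kappa^+$ an $\Rforce_{u\restriction\alpha}$-name $j(\sigma)_{u\restriction\alpha}$ for a function $2^{<\kappa}\to 2$ in $M$, and $j(\epsilon)_{u\restriction\alpha}\in 2$. Now consider $\Add(j(p^*),w\restriction\alpha)\in\Rforce^M_{j(w)}$: this condition forces that $j(\dot c)\restriction 2^{<\kappa}$ is the realization of $j(\sigma)_{u\restriction\alpha}$ and that $j(\dot g)(\kappa)=j(\epsilon)_{u\restriction\alpha}$. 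The point is now to choose, using $2^\kappa>\kappa^+$ and the $\kappa^+$-c.c.\ of $\Rforce_w$ (Lemma~\ref{thm:chain condition}) to bound the number of possible realizations, a single function $f\colon\kappa\to 2$ in $V$ such that for every $\alpha<\kappa^+$, $\Vdash_{\Rforce_{u\restriction\alpha}}$``$j(\sigma)_{u\restriction\alpha}(\check f\restriction\delta)\ne j(\epsilon)_{u\restriction\alpha}$'' for club-many (indeed all large) $\delta<\kappa$ — more precisely, where $\delta$ ranges appropriately; here is where I must be careful, because $c$ is applied to restrictions $f\restriction\delta$ of length $\delta$, so I need the evasion to work for a club of $\delta$, and the counting must be done uniformly in $\delta$. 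Granting this, reflecting via {\L}o\'s's theorem gives, for each $0<\alpha<\kappa^+$, a set $A^{**}=\{u\in A^*\mid\Add(p^*,u)\Vdash$``$\dot c(\check f\restriction\kappa_u)\ne\dot g(\kappa_u)$''$\}\in w(\alpha)$, and restricting $H^*$ to $A^{**}$ yields $p^{**}\le^* p^*$ forcing that the set of $\alpha\in\dot C$ with $\dot c(\check f\restriction\alpha)=\dot g(\alpha)$ is bounded in $\kappa$, hence nonstationary, contradicting that $(\dot c,\dot g)$ was a weak-diamond pair.

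The main obstacle I expect is the counting/evasion step: one must simultaneously avoid, for all $\kappa^+$-many indices $\alpha$ and for club-many lengths $\delta<\kappa$, the values dictated by the $j(\sigma)_{u\restriction\alpha}$'s, and produce a single $f\in V$ doing so. In the diamond proof this was clean because each $j(\tau)_{u\restriction\alpha}$ named a single subset of $\kappa$ with only $\kappa$-many possibilities, so one subset $S\in V$ sufficed by $2^\kappa>\kappa^+$; here the name $j(\sigma)_{u\restriction\alpha}$ determines a function $2^{<\kappa}\to 2$ (again with at most $\kappa$-many possible realizations by the chain condition), and for a fixed $f$ the bit $j(\sigma)_{u\restriction\alpha}(f\restriction\delta)$ is decided, so one needs $f$ to evade a ``prediction function'' for each $\alpha$; since there are $\kappa^+$ predictions and $2^\kappa>\kappa^+$ functions $f$, and for each prediction the set of $f$ it catches stationarily often is — by the Devlin--Shelah equivalence $\Phi_{\kappa^+}\leftrightarrow 2^\kappa<2^{\kappa^+}$ applied in $M$, or by a direct diagonalization — not all of $2^\kappa$, a counting/pigeonhole argument should deliver the required $f$. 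Making this uniform over $\delta$ and over the $\kappa^+$ indices, staying inside $M$ where needed, is the delicate part; everything else is a routine adaptation of Lemma~\ref{thm:diamond in extension}.
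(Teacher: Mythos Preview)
There is a genuine gap, and it is at the level of the quantifier structure, not the Radin technology. Recall that $\Phi_\kappa$ is $\forall c\,\exists g\,\forall f$ (guessing), so $\neg\Phi_\kappa$ is $\exists c\,\forall g\,\exists f$. You must therefore \emph{construct a specific coloring} $c$ and then, for each $g$, produce an evading $f$. Your write-up never specifies $c$; you treat $(\dot c,\dot g)$ as a ``counterexample pair'' to be defeated and try to build $f$ by counting, exactly as the diamond proof finds $S$. But this cannot work for arbitrary $c$: if $c$ is the constant function $0$, then $g\equiv 0$ guesses every $f$, and no evasion is possible. Concretely, in your counting step the ``bad'' set $B_\alpha=\{f\in 2^\kappa: \exists q\ q\Vdash j(\sigma)_{u\restriction\alpha}(\check f)=j(\epsilon)_{u\restriction\alpha}\}$ need not have size ${\le}\kappa$; for constant $c$ it is all of $2^\kappa$, so the pigeonhole fails. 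There is also a smaller technical slip: you let $\sigma_u$ name $c\restriction 2^{<\kappa_u}$, but the value you need is $c(f\restriction\kappa_u)$ with $f\restriction\kappa_u\in 2^{\kappa_u}$, which is not in the domain of $\sigma_u$.

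The paper (following Ben-Neria--Garti--Hayut) supplies the missing idea: one first fixes, in $V$, bijections $H(\alpha)$ between $2^\alpha$ and sequences of ``simple representations'' of Radin conditions, and then \emph{defines} $c$ in $V[G]$ by decoding each $t\in 2^{\kappa_i}$ into a set of conditions in $\Rforce_{u_i}$ and setting $c(t)=1$ iff that set meets the local generic $G_{u_i}$. Now, given any $g$, one runs your factorization/Prikry/diagonalization machinery to get $\Rforce_u$-names $\sigma_u$ for $g(\kappa_u)$ and a single $p^*$; reflecting via $j$, one lets $h(\tau)$ be a maximal antichain in $\Rforce_{w\restriction\tau}$ of conditions forcing $j(\sigma)_{w\restriction\tau}=0$ and takes $f=H(\kappa)^{-1}(h)\in 2^\kappa$. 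The coloring $c$ was rigged precisely so that $c(f\restriction\kappa_u)=1$ iff the decoded antichain meets $G_{u_i}$, i.e., iff $g(\kappa_u)=0$; hence $c(f\restriction\kappa_u)\neq g(\kappa_u)$ on a club. So the diagonalization and the reflection are as you describe, but the crucial ingredient is the explicit coding coloring $c$, without which no counting argument can succeed.
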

\begin{remark}
Unlike the proof of Lemma \ref{thm:diamond in extension}, where we only needed  $V$ to satisfy $2^\kappa > \kappa^+$, we need the extra assumption $M \models$`` $2^\kappa=2^{\kappa^+}=\kappa^{++}$'' for the proof of Lemma
\ref{thm:weak diamond in extension}.
\end{remark}
\begin{proof}	
The proof follows ideas developed in \cite{hayut}. Note that in $M$, we have $2^\kappa=2^{\kappa^+}=\kappa^{++}$,
so in $V$, there exists a partial function $H: \kappa \to V_\kappa$ such that $\dom(H)=\{  \alpha < \kappa \mid 2^\alpha =2^{\alpha^+}=\alpha^{++} \}$ \footnote{In fact we have $\dom(H)=\{  \alpha < \kappa \mid \alpha$ is an inaccessible cardinal$ \}.$}
and for all $\alpha \in \dom(H),$
\[
H(\alpha): 2^\alpha \leftrightarrow (V_\alpha^{<\omega} \times \alpha \times P(V_\alpha) \times P(V_\alpha\times V_\alpha) \times P(V_\alpha))^{\alpha \times \alpha^+}
\]
is a bijection. By slightly abuse of notations set $H(\kappa)=j(H)(\kappa)$.

As in \cite{hayut}, let us identify a condition $p= \vec{d}^{\frown} (w, \lambda, A, H , h) \in \Rforce_w$ with
\[
\langle \vec{d},  \lambda, A, H , h  \rangle \in V_\kappa^{<\omega} \times \kappa \times P(V_\kappa) \times P(V_\kappa\times V_\kappa) \times P(V_\kappa)
\]
and call it a simple representation of $p$. As $\Rforce_w$ satisfies the $\kappa^{+}$-c.c., every antichain in $\Rforce_w$
can be represented as an element of $(V_\kappa^{<\omega} \times \kappa \times P(V_\kappa) \times P(V_\kappa\times V_\kappa) \times P(V_\kappa))^\kappa.$

Define $F: 2^{<\kappa} \to V_\kappa, F \in V$ as follows: for $t \in 2^{<\kappa}$ with $\dom(t)=\alpha, F(t) \in V_\kappa$
is a function such that
\begin{itemize}
\item $\dom(F(t)) = \{u \in \mathcal{U}_\infty \cap V_\kappa \mid \kappa_u=\alpha   \}$.
\item $F(t)(u) = \{ q \in \Rforce_u \mid q$ is simply represented by an element of $H(\alpha)(t)(\len(u))             \}$.
\end{itemize}
Note that $H(\alpha)(t) \in (V_\alpha^{<\omega} \times \alpha \times P(V_\alpha) \times P(V_\alpha\times V_\alpha) \times P(V_\alpha))^{\alpha \times \alpha^+},$
and so $F(t)(u)$ is well-defined.
Now define the coloring $c: 2^{<\kappa} \to 2, c \in V[G],$ as follows:
\begin{itemize}
\item If $t \in 2^{\kappa_i},$ for some $i<\kappa,$ then
\begin{center}
 $c(t)=$ $\left\{
\begin{array}{l}
         1  \hspace{1.6cm} \text{ if } F(t)(u_i) \cap (G_{u_i}) \neq \emptyset,\\
         0  \hspace{1.6cm} \text{ otherwise. }
     \end{array} \right.$
\end{center}
where $G _{u_i}$ is the $\Rforce_{u_i}$-generic filter generated by $G$ and $\kappa_i=\kappa_{u_i}$.

\item $c(t)=0$ for every other $t \in 2^{<\kappa}$.
\end{itemize}
We show that $c$ exemplifies the failure of $\Phi_\kappa$ in $V[G].$ Thus suppose that $g: \kappa \to 2, g\in V[G].$ We find some $f: \kappa \to 2, f\in V[G]$
such that the set $\{ \alpha < \kappa \mid c(f \upharpoonright \alpha) \neq g(\alpha)           \}$
contains a club of $\kappa.$

Let $\dot{g}$ be an $\Rforce_w$-name for $g$, and suppose  $p= \vec{d}^{\frown} (w, \lambda, A, H , h) \in \Rforce_w$.
As before, we can assume that for each
 $u \in A,$  $\Add(p,u)$ is defined.  Thus for each $u \in A$, we can form the condition
$\Add(p, u)= \vec{d}^{\frown} \langle p^u_0, p^u_1  \rangle \in \Rforce_w$, where
$p^u_0$ and $p^u_1$ are defined as in the proof of Lemma \ref{thm:diamond in extension}.
By the factorization lemma \ref{thm:factorization lemma},
\[
\Rforce_w / \Add(p, u) \simeq (\Rforce_u / \vec{d}^{\frown}p^u_0) \times (\Rforce_w / p^u_1).
\]
By the Prikry property \ref{thm:prikry property}, we can find
$q^u_1 = (w, \lambda, A_u, H_u, h_u) \leq^* p^u_1$ and an $\Rforce_u$-name $\sigma_u$ for an ordinal in $\{0,1\}$ such that
 \[
 \vec{d}^{\frown} \langle p^u_0, q^u_1  \rangle \Vdash \text{~``} \dot{g}(\kappa_u) = \sigma_u \text{''}.
 \]

As in the proof of Lemma \ref{thm:diamond in extension}, let $A_1= \bigtriangleup_{u \in A} A_u \in \mathcal{F}_w$
 be the diagonal intersection of $A_u$'s, $u \in A$.
 Consider the sequence $\langle  [H_u]_{\mu_w}: u \in A_1      \rangle \subseteq F_w$ and let $H^* \in F^*_w$ be such that $[H^*]_{\mu_w} \leq [H_u]_{\mu_w}$  for all
 $u \in A_1.$
 Thus for each $u \in A_1,$ we can find  a measure one set $B_u \in \mathcal{F}_w$ on which $H^*$ extends $H_u$. Set
 $A_2 = A_1 \cap  \bigtriangleup_{u \in A_1} B_u.$

 Now let $A^* \in \mathcal{F}_w, A^* \subseteq A_2$, be such that for all $w', w'' \in A^*, h_{w'}$
 and $h_{w''}$ are compatible, and set
 \[
 p^* =\vec{d}^{\frown} (w, \lambda, A^*, H^*, h) \leq^* p.
 \]
 Then $p^* \leq^* p$ and for every $u \in A^*,$ we have $\Add(p^*, u)$ and
 \[
 \Add(p^*, u) \Vdash  \text{~``} \dot{g}(\kappa_u) = \sigma_u \text{''}.
 \]
Consider the map $\sigma: u \mapsto \sigma_u$ which is defined on $A^*$.
In $M$, define the function $h: \kappa^+ \to (V_\kappa^{<\omega} \times \kappa \times P(V_\kappa) \times P(V_\kappa\times V_\kappa) \times P(V_\kappa))^\kappa$
as follows: for every $\tau < \kappa^+$ let $h(\tau) \in (V_\kappa^{<\omega} \times \kappa \times P(V_\kappa) \times P(V_\kappa\times V_\kappa) \times P(V_\kappa))^\kappa$ be a simple representation of a maximal antichain $A_\tau \subseteq \Rforce_{w \upharpoonright \tau}$
of conditions $q \in \Rforce_{w \upharpoonright \tau}$ which force $j(\sigma)_{w \upharpoonright \tau} =0.$
$h$ can be identified with an element of $(V_\kappa^{<\omega} \times \kappa \times P(V_\kappa) \times P(V_\kappa\times V_\kappa) \times P(V_\kappa))^{\kappa \times \kappa^+}$, and so we can consider the function
$f=H(\kappa)^{-1}(h): \kappa \to 2$, where $H(\kappa)=j(H)(\kappa)$.

Set $A^{**}= \{u \in A^* \mid F(f \upharpoonright \kappa_u)(u)$ is a maximal antichain of $\Rforce_u$ of conditions $q \Vdash$``$\sigma_u=0$''$ \}$.
Then we have $j(H)(\kappa)(j(f)\restriction \kappa)=H(\kappa)(f)=h$ and  $j(F)(j(f)\restriction \kappa)(w)=j(F)(f)(w)$ and
hence
 \begin{center}
 $j(F)(f)(w)$ is a maximal antichain of $\Rforce_w$ of conditions $q \Vdash$``$j(\dot{g})(\kappa)=0$''.
 \end{center}
It immediately follows that
 $A^{**} \in \mathcal{F}_w$. Set $p^{**}=\vec{d} ^{\frown} (w, \lambda, A^{**}, H^* \upharpoonright A^{**}, h)$.
  Then
\[
p^{**} \Vdash\text{~``} \dot{C} \cap \{\alpha < \kappa \mid c(f \upharpoonright \alpha) =\dot{g}(\alpha)          \} \text{~is bounded in~} \kappa\text{~''},
\]
where $\dot{C}$ is the canonical $\Rforce_w$-name for the Radin club. To see this note that if $G$ is $\Rforce_w$-generic over $V$ and $p^{**} \in G,$
then $C=\dot{C}[G]$ is almost contained in $\{\kappa_u: u \in A^{**}\}$ and for all $u \in A^{**}$ we have
$F(f \restriction \kappa_u)(u) \cap G_u \neq \emptyset,$ where $G_u=G \cap \Rforce_u,$
thus $c(f \restriction \kappa_u)=1$, while $g(\kappa_u)=\sigma_u=0$.
The result follows immediately.
\end{proof}
In the model of \ref{thm:main theorem}, $\GCH$ fails cofinally often below $\kappa$, and we do not know the answer to the following.
\begin{question}
	\label{diamond and gch}
Is it consistent with $\GCH$ that $\Diamond_\kappa$ (or $\Phi_\kappa$) fails for the least inaccessible cardinal.
\end{question}
Also it is possible to extend our result to make $\kappa$ the least Mahlo cardinal (by taking $\len(w)=\kappa^+ \cdot \kappa^+$), the least greatly Mahlo cardinal
 (by taking $\len(w)=(\kappa^+)^{ \kappa^+}$) and so on. On the other hand $\Diamond_\kappa$ (and hence $\Phi_\kappa$) holds if $\kappa$ is large enough, say a measurable cardinal. However the answer to the following is unknown.
\begin{question}
Is it consistent  that $\Diamond_\kappa$ (or $\Phi_\kappa$) fails for $\kappa$ a weakly compact cardinal.
\end{question}
\subsection*{Acknowledgements}
The author thanks Hugh Woodin \cite{woodin} for allowing him to use his unpublished ideas, and  James Cummings \cite{cummings2010} for explaining him Woodin's proof. He also thanks the referee of the paper for his/her many useful comments and corrections.

\end{document}